\documentclass[12pt, reqno]{amsart}
\usepackage{amsmath} \setlength{
\textwidth}{6.4 in} \setlength{
\textheight}{9.0 in} \hoffset=-56pt

\usepackage{amsmath, amsfonts, rawfonts,amssymb,txfonts, enumerate}
\usepackage{eucal}
\usepackage{latexsym}
\usepackage{cite, color}
 \usepackage{bm}

\newcommand{\m}{\mathbb R^{n-1}}

\newcommand{\rnnn}{\mathbb R^n}

\newcommand{\sn}{ {\mathbb{S}^{n-1}}}
\newcommand{\R}{\mathbb R}

\newcommand{\psum}{{+_{\negthinspace\kern-2pt p}}\,}
\newcommand{\qsum}[1]{{+_{\negthinspace\kern-2pt #1}}\,}
\newcommand{\dpsum}{{\tilde+_{\negthinspace\kern-1pt p}}\,}
\newcommand{\dqsum}[1]{{\tilde+_{\negthinspace\kern-1pt #1}}\,}
\newcommand{\lsub}[1]{\hskip -1.5pt\lower.5ex\hbox{$_{#1}$}}

\numberwithin{equation}{section}

\newtheorem{theo}{Theorem}[section]

\newtheorem{lem}[theo]{Lemma}
\newtheorem{prop}[theo]{Proposition}

 \theoremstyle{definition}

\begin{document}

\title{A class of generalized fully nonlinear curvature flows and its applications}
\author[J. Hu]{Jinrong Hu}
\address{School of Mathematics, Hunan University, Changsha, 410082, Hunan Province, China}
\email{hujinrong@hnu.edu.cn}
\author[J. Liu]{Jiaqian Liu}
\address{School of Mathematics, Hunan University, Changsha, 410082, Hunan Province, China}
\email{liujiaqian@hnu.edu.cn}
\author[D. Ma]{Di Ma}
\address{School of Mathematics, Hunan University, Changsha, 410082, Hunan Province, China}
\email{madi@hnu.edu.cn}
\author[J. Wang]{Jing Wang}
\address{School of Mathematics, Hunan University, Changsha, 410082, Hunan Province, China}
\email{wangjingz@hnu.edu.cn}

\begin{abstract}
In this paper, we concern a generalized fully nonlinear curvature flow involving $k$-th elementary symmetric function for principal curvature radii in Eulidean space $\rnnn$, $k$ is an integer and $1\leq k\leq n-1$. For $1\leq k< n-1$, based on some initial data and constrains on smooth positive function defined on the unit sphere $\sn$, we obtain the long time existence and convergence of the flow.  Especially, the same result shall be derived for $k=n-1$ without any constraint on the smooth positive function.

\end{abstract}
\keywords{Asymptotic behaviour, Geometric flow, Fully nonlinear equation}
\subjclass[2010]{35K55, 52A20, 58J35 }
\maketitle

\baselineskip18pt

\parskip3pt

\section{Introduction}

The geometric flows are deemed as an effective tool for promoting the research in some fields, such as geometric analysis, PDEs etc. Among a range of flows, one with the speed of functions involving principal curvatures have been widely studied. Firey\cite{F74} first introduced the Gauss curvature flow to describe shape of worn stones. Husiken \cite{Hu84} concerned with the mean curvature flow to characterise the asymptotic behaviour of the hypersurface. Chow\cite{C85}, Andrews\cite{An94}, Brendle-Choi-Dasklopoulos\cite{BCD17} considered the curvature flow at the speed of $\alpha$-power of Gauss curvature to analyse the deformation of hypersurface, for more reading, refer to \cite{An99, CW00, CHZ19, LSW20, LSW200}. Another main issue is the study of the problem on the existence of the prescribed polynomial of the principal curvature radii of the hypersurface. Urbas\cite{U91}, Gerhart\cite{G14}, Chow-Tsai\cite{CT}, Bryan-Ivaki-Scheuer\cite{BIS21} shed light on the convergence for the flow with the speed of symmetric polynomial of the principal curvature radii of the hypersurface. In this paper, we consider a class of generalized fully nonlinear curvature flow of convex hypersurfaces $M_{t}$ parameterized by smooth map $X(\cdot,t):\sn\rightarrow \rnnn$ satisfying
\begin{equation}\label{mainflow}
\left\{
\begin{array}{lr}
\frac{\partial X(x,t)}{\partial t}=\frac{1}{f(v)}\sigma_{k}(x,t)\varphi(X\cdot v)(X\cdot v)G(X)v-X;  \\
X(x,0)=X_{0}(x),
\end{array}\right.
\end{equation}
where $\sigma_{k}$ is the $k$-th elementary symmetric function for principal curvature radii,  $\varphi:(0,+\infty)\rightarrow (0,+\infty)$, $G:\rnnn \backslash 0\rightarrow (0,+\infty)$ are two given continuous functions, and $v=x$ is the unit outer normal vector of $M_{t}$ at $X(\cdot,t)$. An important feature of our flow \eqref{mainflow} is that the speed is a more general curvature function without homogeneity. If a positive self-similar solution of \eqref{mainflow} exists, it is a solution to the following fully nonlinear equation
\begin{equation}\label{Or-Mong}
\gamma\varphi(h)G(\nabla h)\sigma_{k}(x)=f(x)\quad \ {\rm for} \ \gamma=1.
\end{equation}

From the point view of convex geometry, the solvability of \eqref{Or-Mong} shall be deemed as a generalization of $L_{p}$ Christoffel-Minkowski problem. It covers many well-known results. When $k=n-1$, \eqref{Or-Mong} reduces to the smooth case of dual Orlicz-Minkowski problem studied by \cite{LY20,CL22}. In the case $\varphi(s)=s^{1-p}$, $G(y)=|y|^{q-n}$, \eqref{Or-Mong} is just the $L_{p}$ dual Minkowski problem and is the dual Minkowski problem for the index $p=1$ characterized by the dual curvature measure, which was first proposed by \cite{HLYZ16}. When $1\leq k <n-1$, for $\varphi(s)=s^{1-p}$, $G(y)=1$, \eqref{Or-Mong} is so-called $L_{p}$-Christoffel-Minkowski problem. This problem is related to the problem of prescribing $k$-th $p$-area measures, Ivaki \cite{Iva19} and Sheng-Yi\cite{SY20} gave the existence of smooth solutions in the case $p\geq k+1$ from the perspective of geometric flows.  Li-Ju-Liu\cite{LL22} treat the case $\varphi(s)=s^{1-p}$, $G(y)=|y|^{q-n}$ for $1\leq k <n-1$, and Ju-Li-Liu\cite{JL21} deal with the case $G(y)=1$ for $1\leq k <n-1$. To the best of our knowledge, the flow \eqref{mainflow} has never been studied before. Our research shall generalize and enrich the related topic to some degree.

To obtain the solvability of \eqref{Or-Mong} via the flow \eqref{mainflow}, we need some constraints on $\varphi$ and $G$.
\begin{enumerate}\label{conA}
\item[({\bf A}):] Suppose $\varphi(s)\max_{|y|=s}G(y)<\beta_{0}s^{-k-\varepsilon}$ for some positive constants $\varepsilon$, $\beta_{0}$ for $s$ near $+\infty$, and $\varphi(s)\min_{|y|=s}G(y)>\beta_{1}s^{-k-\varepsilon}$ for some positive constants $\varepsilon$, $\beta_{1}$ for $s$ near 0. Here $k$ is the order of $\sigma_{k}$.

\end{enumerate}

We are now in a position to state that the main aim of current work is to obtain the long time existence and convergence results of the flow \eqref{mainflow}. It is shown in the following theorem.
\begin{theo}\label{main*}
Let $M_{0}$ be a smooth, and strictly convex hypersurface in $\rnnn$ enclosing the origin. Supposing $1\leq k <n-1$, $k$ is an integer. Let $\varphi, G$ be smooth functions satisfying ${\bf A}$. Furthermore, for any $s>0$, $y\in \rnnn \backslash 0$,
\[
\frac{\partial }{\partial s}\left( s\frac{\partial}{\partial s}(\log \varphi(s))\right)\geq0,\quad -\vartheta \leq s\frac{\partial}{\partial s}(\log \varphi(s))\leq -1,
\]
\[
k+s\frac{\partial }{\partial s}(\log\varphi(s))+\frac{\nabla G(y)\cdot y}{G}<0,\quad \nabla G(y)\cdot y_{\tau}=0,
\]
where $\vartheta$ is a positive constant and $y_{\tau}$ is the tangential component of $y$. If $f$ is a smooth function on $\sn$ such that
\[
(k+1)f^{-\frac{1}{k+\vartheta}}\delta_{ij}+(k+\vartheta)\nabla_{ij}(f^{-\frac{1}{k+\vartheta}})
\]
is positive definite. Then there exists a smooth, strictly convex solution $M_{t}$ to flow equation \eqref{mainflow} for all time $t>0$, and it subconverges in $C^{\infty}$ to a smooth and strictly convex solution to equation \eqref{Or-Mong} for $\gamma=1$.
\end{theo}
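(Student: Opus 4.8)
\medskip
\noindent\emph{Outline of the proof.} The plan is to run the standard a priori estimate program for fully nonlinear flows of convex hypersurfaces, with the hypotheses on $\varphi$, $G$ and $f$ entering precisely at the curvature estimates. First I would pass to the support function $h(x,t)=\langle X(x,t),x\rangle$. Since \eqref{mainflow} is parameterized so that $x$ stays the outer normal, projecting onto $v=x$ gives the scalar evolution equation
\[
\frac{\partial h}{\partial t}=\frac{1}{f(x)}\,\varphi(h)\,h\,G(X)\,\sigma_{k}(W)-h,\qquad W:=\nabla^{2}h+h\,I,
\]
on $\sn\times[0,T)$, where $\nabla$ is the connection of $\sn$ and $X=\nabla h+hx$. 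On the cone of positive definite $W$ the map $W\mapsto\sigma_{k}(W)$ is strictly monotone, so this is a (fully nonlinear) parabolic equation; by standard parabolic theory there is a unique smooth strictly convex solution on a maximal interval $[0,T^{*})$. The long time existence then reduces to uniform-in-$t$ a priori bounds, namely $0<c_{1}\le h\le c_{2}$, a gradient bound, two-sided bounds $0<\lambda I\le W\le\Lambda I$, and preservation of strict convexity: once $W$ is uniformly pinched the equation is uniformly parabolic, and Krylov--Safonov together with the Evans--Krylov theorem (applied to the concave function $\sigma_{k}^{1/k}$) give uniform $C^{2,\alpha}$ bounds, whence a Schauder bootstrap yields uniform $C^{\infty}$ bounds and $T^{*}=\infty$.

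For the $C^{0}$ bound I would apply the maximum principle to $h$. At a spatial extremum $\nabla h=0$, so $X=hx$ there and $G(X)=G(hx)$, while $W\le hI$ at a maximum and $W\ge hI$ at a minimum, hence $\sigma_{k}(W)$ is trapped between $\binom{n-1}{k}h^{k}$ in the two cases. Using hypothesis \textbf{A} — $\varphi(s)\max_{|y|=s}G(y)<\beta_{0}s^{-k-\varepsilon}$ for $s$ large — the maximum of $h$ satisfies $\partial_{t}h_{\max}\le C h^{1-\varepsilon}-h<0$ once $h$ is large, so $h$ stays bounded above; the lower bound near $0$ in \textbf{A} forces $\partial_{t}h_{\min}>0$ when $h$ is small, giving $h\ge c_{1}>0$ (in particular the origin stays strictly enclosed). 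Then $M_{t}$ is trapped between two fixed balls centred at the origin, and the gradient bound $|\nabla h|^{2}=|X|^{2}-h^{2}\le c_{2}^{2}$ is automatic.

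The heart of the argument — and the step I expect to be the main obstacle — is the curvature estimate together with \emph{preservation of strict convexity with a uniform lower bound on the principal curvature radii}, which for $1\le k<n-1$ is genuinely delicate (unlike $k=n-1$, where a lower bound on $\det W=\sigma_{n-1}(W)$ follows from the upper bounds). I would proceed in three stages. (i) Differentiating the scalar equation in $t$ and applying the parabolic maximum principle, together with the $C^{0}$ bound, controls the speed $\tfrac1f\varphi(h)hG(X)\sigma_{k}(W)=h+\partial_{t}h$ from above and away from zero, hence yields two-sided bounds $0<c_{3}\le\sigma_{k}(W)\le c_{4}$ (positivity of the speed being free from strict convexity). (ii) For the upper bound on the eigenvalues of $W$ I would run a maximum principle on an auxiliary function of the form $\log\lambda_{\max}(W)+\Psi(h)$ (or on $\mathrm{tr}\,W$ with a suitable weight), using the concavity of $\sigma_{k}^{1/k}$ and the conditions $-\vartheta\le s\,\partial_{s}(\log\varphi(s))\le-1$, $\partial_{s}\!\left(s\,\partial_{s}(\log\varphi(s))\right)\ge0$ and $k+s\,\partial_{s}(\log\varphi(s))+\frac{\nabla G(y)\cdot y}{G}<0$ to absorb the bad gradient and zeroth-order terms. (iii) For the lower bound — equivalently, that $\lambda_{\min}(W)$ cannot degenerate — I would invoke a constant-rank / strong-maximum-principle argument in the spirit of Guan--Ma: the hypothesis that
\[
(k+1)f^{-\frac{1}{k+\vartheta}}\delta_{ij}+(k+\vartheta)\,\nabla_{ij}\!\left(f^{-\frac{1}{k+\vartheta}}\right)
\]
is positive definite is exactly the ``convexity of the transformed inhomogeneous term'' needed for this, and $\nabla G(y)\cdot y_{\tau}=0$ removes the obstruction coming from the $G$-factor. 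Combining (i)--(iii) pins $W$ between two positive multiples of $I$, completing the uniform estimates and hence the long time existence.

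Finally, for convergence I would exhibit a functional $\mathcal{J}(M_{t})$ — the Orlicz/dual-volume functional naturally associated with \eqref{mainflow}, built from a primitive of $\varphi$ integrated against $f$ and from a $G$-weighted dual volume of the body enclosed by $M_{t}$ — for which the flow satisfies a dissipation identity $\frac{d}{dt}\mathcal{J}(M_{t})=-\int_{\sn}(\text{nonnegative})\,(\partial_{t}h)^{2}\le0$; by the $C^{0}$ bound $\mathcal{J}$ is also bounded, so $\mathcal{J}(M_{t})$ converges and $\int_{0}^{\infty}\!\int_{\sn}(\text{nonnegative})(\partial_{t}h)^{2}<\infty$, which forces $\partial_{t}h(\cdot,t_{j})\to0$ along some sequence $t_{j}\to\infty$. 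By the uniform $C^{\infty}$ estimates and Arzel\`a--Ascoli, a subsequence of $M_{t_{j}}$ converges in $C^{\infty}$ to a smooth strictly convex hypersurface whose support function satisfies $\varphi(h)G(X)\sigma_{k}(W)=f$, i.e. a solution of \eqref{Or-Mong} with $\gamma=1$. This is precisely the asserted subconvergence.
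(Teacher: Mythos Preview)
Your estimate program largely matches the paper's, but there is a genuine gap in the convergence step. For $1\le k<n-1$ the equation has no variational structure --- the paper says so explicitly --- so the monotone-functional/dissipation argument you propose is not available; the functional you describe exists and is used only in the case $k=n-1$. The hypothesis $k+s\,\partial_s(\log\varphi)+\tfrac{\nabla G\cdot y}{G}<0$, which you earmarked for absorbing bad terms in the curvature estimate, is in fact the engine of convergence. Writing $P=\tfrac{1}{f}\varphi(h)\,h\,G(X)\,\sigma_k$, one computes
\[
\partial_t\Big(\tfrac{P}{h}-1\Big)-\Theta\,\sigma_k^{ij}\nabla_{ij}\Big(\tfrac{P}{h}-1\Big)
=\Big(k+\tfrac{\varphi'h}{\varphi}+\tfrac{\nabla G\cdot X}{G}\Big)\tfrac{P}{h}\Big(\tfrac{P}{h}-1\Big)+2\tfrac{\Theta}{h}\sigma_k^{ij}\nabla_i h\,\nabla_j\Big(\tfrac{P}{h}-1\Big),
\]
and the negative zeroth-order coefficient forces the sign of $P/h-1$ to be preserved. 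Choosing the initial hypersurface with $P/h>1$ one gets $\partial_t h=P-h>0$ for all $t$; since $h$ is uniformly bounded, $\int_0^\infty (P-h)(x,t)\,dt<\infty$ pointwise, so $(P-h)(\cdot,t_j)\to 0$ along a subsequence, and the uniform $C^\infty$ bounds then produce the limiting solution.

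A secondary point: for the lower bound on the eigenvalues of $W$ (your stage~(iii)) the paper does not invoke a Guan--Ma constant-rank theorem but runs a direct maximum principle on $w^{11}/h$. The positive-definiteness condition on $f$ enters through the identity
\[
\nabla_{11}\Theta-\tfrac{k}{k+1}\tfrac{(\nabla_1\Theta)^2}{\Theta}+(k+1)\Theta
=(k+1)\,\Theta^{\frac{k}{k+1}}\Big(\Theta^{\frac{1}{k+1}}+(\Theta^{\frac{1}{k+1}})_{11}\Big),
\]
after which the hypotheses $-\vartheta\le s\,\partial_s(\log\varphi)\le -1$ and $\partial_s\big(s\,\partial_s(\log\varphi)\big)\ge 0$ reduce the required positivity exactly to the stated condition on $(k+1)f^{-1/(k+\vartheta)}\delta_{ij}+(k+\vartheta)\nabla_{ij}f^{-1/(k+\vartheta)}$, yielding a uniform-in-$t$ quantitative bound. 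A constant-rank argument by itself gives only qualitative preservation of strict convexity, not this uniform lower bound.
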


In view of Theorem \ref{main*}, for $1\leq k < n-1$, if $\varphi(s)=s^{1-p}$, $G(y)=1$ for $p\geq k+1$, equipped with the above condition on $f$, Theorem \ref{main*} recovers the existence results to the $L_{p}$-Christoffel-Minkowski problem which have been derived by \cite{Iva19,SY20,HMS04}. It should be remarked that, for $k=n-1$, the same result shall be showed in Theorem \ref{main*} without constraint on $f$.
\begin{theo}\label{main*2}
Let $M_{0}$ be a smooth, and strictly convex hypersurface in $\rnnn$ enclosing the origin, and $f$ be a smooth and positive function on $\sn$. Supposing $k=n-1$. Let $\varphi, G$ be smooth functions satisfying ${\bf A}$. Then there exists a smooth, strictly convex solution $M_{t}$ to flow equation \eqref{mainflow} for all time $t>0$, and it subconverges in $C^{\infty}$ to a smooth and strictly convex solution to equation \eqref{Or-Mong} for $\gamma=1$.
\end{theo}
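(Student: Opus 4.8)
The plan is to recast \eqref{mainflow} as a scalar parabolic equation for the support function, establish a priori estimates uniform in time, deduce long-time existence, and extract a convergent subsequence of the flow by means of a monotone functional. The special feature of the case $k=n-1$ is that $\sigma_{n-1}$ is the product of all $n-1$ principal curvature radii, so the driving operator is of Monge--Amp\`ere type; this is precisely why no structural condition on $f$ (and none on $\varphi,G$ beyond $\mathbf A$) is needed here, in contrast with Theorem \ref{main*}. Parametrizing $M_t$ by the inverse Gauss map and writing $h(x,t)=X(x,t)\cdot x$ for its support function, one has $X=\nabla h+hx$ ($\nabla$ the covariant derivative on $\sn$), $X\cdot v=h$, and the matrix of principal radii is $W=\nabla^2h+hI$, so $\sigma_{n-1}=\det W$. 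Thus \eqref{mainflow} is equivalent to
\begin{equation}\label{sf}
\partial_t h=\frac{1}{f(x)}\,\varphi(h)\,h\,G(\nabla h)\,\det\!\big(\nabla^2h+hI\big)-h,\qquad h(\cdot,0)=h_0,
\end{equation}
with $G(\nabla h)$ understood as $G(X)$. Since $h_0$ is the support function of a smooth strictly convex body enclosing the origin, $h_0>0$ and $W_0>0$, so \eqref{sf} is parabolic and admits a unique smooth solution on a maximal interval $[0,T^*)$; it suffices to bound $h$ in $C^\infty(\sn)$ uniformly on $[0,T^*)$, for this forces $T^*=\infty$, and a subsequential $C^\infty$-limit then solves \eqref{Or-Mong} with $\gamma=1$.

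For the $C^0$ estimate I would apply the maximum principle to $x\mapsto h(x,t)$. At a point where $h(\cdot,t)$ attains its spatial maximum, $\nabla h=0$ and $W\le hI$, whence $\det W\le h^{n-1}$ and $|X|=h$; hypothesis $\mathbf A$ (with $k=n-1$) then gives, for $\max_x h$ large,
\[
\tfrac{d}{dt}\max_x h\le\tfrac{\beta_0}{\min f}\,(\max_x h)^{1-\varepsilon}-\max_x h<0 .
\]
Symmetrically, at a spatial minimum $W\ge hI$, $\det W\ge h^{n-1}$, and the second part of $\mathbf A$ gives $\tfrac{d}{dt}\min_x h>0$ once $\min_x h$ is small. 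Hence $0<c_1\le h(\cdot,t)\le c_2$ uniformly; by convexity this traps $M_t$ between the balls of radii $c_1$ and $c_2$, so $|\nabla h|\le c_2$, $|X|\le c_2$ and $c_3\le G(\nabla h)\le c_4$ uniformly. For the $C^2$ estimate, set $\Theta:=\partial_t h+h=\tfrac1f\varphi(h)hG(\nabla h)\det W$; differentiating \eqref{sf} and running the parabolic maximum principle on $\Theta$ (or $\Theta/h$), while absorbing the terms produced by $\partial_t$ and $\nabla$ of the inhomogeneous factor $\varphi(h)hG(\nabla h)$ via the $C^0$--$C^1$ bounds, yields $0<c_5\le\Theta\le c_6$, hence $0<c_7\le\det W\le c_8$ uniformly. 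To bound the principal radii from above I would apply the maximum principle to an auxiliary quantity of the form $\log\lambda_{\max}(W)-Ah$ with $A$ large (a Tso--Chou--Wang type test function), exploiting the concavity of $W\mapsto\log\det W$ and the lower bound on $\det W$ to dominate the third-order terms; this gives $\lambda_{\max}(W)\le C$, and then $\lambda_{\min}(W)\ge c_7C^{-(n-2)}>0$ from the lower bound on $\det W$. Consequently $C^{-1}I\le W\le CI$ uniformly.

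With uniform ellipticity in hand, rewriting \eqref{sf} as $\log(\partial_t h+h)=\log\!\big(\tfrac1f\varphi(h)hG(\nabla h)\big)+\log\det W$ exhibits it as a uniformly parabolic equation concave in $\nabla^2h$ modulo lower order terms, so (parabolic) Evans--Krylov and Schauder estimates give uniform $C^{2,\alpha}$ and then, by bootstrapping, uniform $C^m$ bounds for every $m$; in particular $T^*=\infty$. For convergence I would use a functional $\mathcal J(t)=\mathcal J(M_t)$ of the type employed in the smooth dual Orlicz--Minkowski problem --- schematically $\int_{\sn}f(x)\Psi(h)\,dx$ corrected by the general dual volume of $M_t$ associated with $G$, where $\Psi$ is an antiderivative of an appropriate multiple of $\varphi$ --- for which \eqref{mainflow} is a (weighted) gradient flow, so that $\tfrac{d}{dt}\mathcal J\le0$ with equality exactly when $\partial_t h\equiv0$, that is, when \eqref{Or-Mong} holds with $\gamma=1$. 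Since the $C^0$ bounds make $\mathcal J$ bounded, $\int_0^\infty|\tfrac{d}{dt}\mathcal J|\,dt<\infty$, so $\tfrac{d}{dt}\mathcal J(t_j)\to0$ along some $t_j\to\infty$; the uniform $C^\infty$ estimates then let a further subsequence of $h(\cdot,t_j)$ converge in $C^\infty$ to some $h_\infty$ with $W_\infty>0$, which solves \eqref{Or-Mong} with $\gamma=1$. The main obstacle is the $C^2$ estimate, specifically the speed bound and the upper bound on $\lambda_{\max}(W)$: because the speed $\tfrac1f\varphi(h)hG(\nabla h)\det W$ is not homogeneous, the scaling identities usually available for Gauss-curvature-type flows do not apply, and one must verify that the error terms generated by differentiating $\varphi(h)hG(\nabla h)$ are genuinely absorbed by the good terms, using only the $C^0$--$C^1$ bounds together with hypothesis $\mathbf A$; a secondary point requiring care is choosing $\mathcal J$ so that its stationary points are exactly the solutions of \eqref{Or-Mong} with $\gamma=1$.
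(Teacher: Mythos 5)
Your proposal mirrors the paper's argument for $k=n-1$ step for step: reduction to the scalar parabolic equation for the support function, $C^0$--$C^1$ bounds from the maximum principle and hypothesis ${\bf A}$, upper and lower bounds on the speed (equivalently on $\sigma_{n-1}=\det W$, as in Lemmas \ref{prin}--\ref{prin2}), a curvature-pinching maximum-principle estimate to close the $C^2$ bound (Lemma \ref{Gau}), Krylov/Evans--Krylov and Schauder for $C^\infty$ regularity and long-time existence, and subsequential convergence via the Lyapunov functional $J(t)=\int_{\sn}\bigl(\int_0^{h}\varphi(s)^{-1}\,ds\bigr)f(x)\,dx-\int_{\sn}\bigl(\int_0^{\rho}G(s,u)s^{n-1}\,ds\bigr)\,du$, whose dissipation vanishes exactly on solutions of \eqref{Or-Mong}. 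The only cosmetic deviation is in the auxiliary quantity for the $C^2$ estimate (you test $\log\lambda_{\max}(W)-Ah$, while Lemma \ref{Gau} tests $\log\lambda_{\max}(\{w^{ij}\})-d\log h+\tfrac{l}{2}\rho^2$), which is equivalent once $\det W$ is pinched above and below.
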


Notice that Theorem \ref{main*2} recovers the existence results to the dual Orlicz-Minkowski problem showed in \cite{LY20} from the Gauss curvature flow point of view.

The organization of this paper goes as follows: In Section \ref{Sec2}, we collect some basic knowledge about $k$-th Hessian operators and convex bodies. In Section \ref{Sec3}, we introduce the geometric flow. In Section \ref{Sec4} and \ref{Sec5}, we obtain the priori estimates of the solution to the relevant flow. In Section \ref{Sec6}, we complete the proof of Theorem \ref{main*} and Theorem \ref{main*2}. At last, we shall provide a special uniqueness result to the solution of equation \eqref{Or-Mong}.

\section{Preliminaries}
\label{Sec2}
We give some basics on $k$-Hessian operators and convex bodies, recommended to see \cite{Guan} and \cite{S14} for good references.
\subsection{$K$-th Hessian Operators}
The $k$-th elementary symmetric function for $\lambda=(\lambda_{1},\cdots,\lambda_{n-1})\in \m$ is defined as
\[
\sigma_{k}(\lambda)=\sum_{1\leq i_{1}<i_{2}<\cdots<i_{k}\leq n-1}\lambda_{i_{1}}\lambda_{i_{2}}\cdots\lambda_{i_{k}},\quad 1\leq k \leq n-1.
\]
Let $S_{n-1}$ be the set of all symmetric $(n-1)\times (n-1)$ metrics. The $k$-th elementary symmetric function for $A\in S_{n-1}$ is
\[
\sigma_{k}(A)=:\sigma_{k}(\lambda(A)),\quad \lambda(A) \ {\rm is \ the \ eigenvalue \ of} \ A.
\]
We say $A\in S_{n-1}$ belongs to $\Gamma_{k}$ if its eigenvalue $\lambda(A)\in \Gamma_{k}$.
The Garding cone $\Gamma_{k}$ is defined as
\[
\Gamma_{k}=\{\lambda\in \m|\sigma_{i}(\lambda)> 0, \ {\rm for } \ 1\leq i\leq k\}.
\]
Here we denote by $\sigma_{k}(\lambda|i)$ the symmetric function with $\lambda_{i}=0$. Now, we list some standard formulas and properities of elementary symmetric functions that we shall use in what follows.
\begin{prop}
Let $\lambda=(\lambda_{1},\cdots, \lambda_{n-1})\in \m$ and $k=0,1,\cdots, n-1$. Then,
\begin{enumerate}
\item[(i)] $\sigma_{k+1}(\lambda)=\sigma_{k+1}(\lambda|i)+\lambda_{i}\sigma_{k}(\lambda|i), \quad \forall 1\leq i\leq n-1$.
\item[(ii)] $\sum_{i=1}^{n-1}\lambda_{i}\sigma_{k}(\lambda|i)=(k+1)\sigma_{k+1}(\lambda)$.

\item[(iii)]$\sum_{i=1}^{n-1}\sigma_{k}(\lambda|i)=(n-k-1)\sigma_{k}(\lambda)$.

\item[(iv)] $\frac{\partial \sigma_{k+1}(\lambda)}{\partial \lambda_{i}}=\sigma_{k}(\lambda|i)$.

\end{enumerate}
\end{prop}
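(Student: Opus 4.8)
The plan is purely combinatorial: for each of (i)--(iv) I would expand both sides in the defining sum over index subsets and match monomials. Fix $\lambda=(\lambda_{1},\dots,\lambda_{n-1})$, put $N=\{1,\dots,n-1\}$, and recall that $\sigma_{m}(\lambda)=\sum_{S\subseteq N,\,|S|=m}\prod_{j\in S}\lambda_{j}$, while $\sigma_{m}(\lambda|i)$ is the same sum with the additional restriction $i\notin S$ (i.e.\ $\sigma_{m}$ after setting $\lambda_{i}=0$); here $\sigma_{0}\equiv 1$ and $\sigma_{m}\equiv 0$ once $m$ exceeds the number of available variables, so the four identities hold as stated for all $k=0,\dots,n-1$.

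For (i) I would split the sum defining $\sigma_{k+1}(\lambda)$ according to whether a $(k+1)$-element set $S$ contains $i$: the subsets with $i\notin S$ contribute exactly $\sigma_{k+1}(\lambda|i)$, while $S\mapsto S\setminus\{i\}$ is a bijection from the subsets with $i\in S$ onto the $k$-element subsets of $N$ avoiding $i$, so these contribute $\lambda_{i}\,\sigma_{k}(\lambda|i)$. In particular $\sigma_{k+1}(\lambda)$ is affine in the single variable $\lambda_{i}$ with coefficient $\sigma_{k}(\lambda|i)$, which is (iv); equivalently one differentiates the defining sum termwise, noting that $\partial_{\lambda_{i}}\prod_{j\in S}\lambda_{j}$ equals $\prod_{j\in S\setminus\{i\}}\lambda_{j}$ if $i\in S$ and $0$ otherwise, and then re-indexes $S\mapsto S\setminus\{i\}$.

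For (ii) and (iii) I would double-count. In $\sum_{i\in N}\lambda_{i}\,\sigma_{k}(\lambda|i)=\sum_{i\in N}\sum_{|S|=k,\,i\notin S}\lambda_{i}\prod_{j\in S}\lambda_{j}$, the monomial $\prod_{j\in T}\lambda_{j}$ attached to a $(k+1)$-element set $T$ is produced once for each choice of $i\in T$ (take $S=T\setminus\{i\}$), hence $k+1$ times in all, giving $(k+1)\sigma_{k+1}(\lambda)$; alternatively this is Euler's relation $\sum_{i}\lambda_{i}\,\partial_{\lambda_{i}}\sigma_{k+1}=(k+1)\sigma_{k+1}$ combined with (iv). Likewise, in $\sum_{i\in N}\sigma_{k}(\lambda|i)=\sum_{i\in N}\sum_{|S|=k,\,i\notin S}\prod_{j\in S}\lambda_{j}$ a fixed $k$-element set $S$ contributes $\prod_{j\in S}\lambda_{j}$ once for each of the $(n-1)-k$ indices $i\in N\setminus S$, so the total is $(n-1-k)\,\sigma_{k}(\lambda)$, i.e.\ (iii). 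One could also obtain (ii) from (i) by summing over $i$ and invoking (iii).

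There is no genuine obstacle here: these identities are classical (see \cite{Guan}), and the only care needed is tracking which subsets carry each monomial. The natural order is to establish (i) and (iv) first by the split/differentiation above, then (iii) by the direct count, and finally (ii) by Euler homogeneity (or from (i) and (iii)).
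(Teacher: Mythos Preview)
Your proof is correct: the combinatorial split of the defining sum according to whether $i\in S$ gives (i) and (iv), and the double-counting (equivalently Euler homogeneity) yields (ii) and (iii). The paper itself does not prove this proposition at all; it merely lists these identities as standard facts, referring the reader to \cite{Guan}. So there is nothing to compare against --- you have supplied a complete elementary argument where the paper gives none.
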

\begin{prop}[Newton-Maclaurin inequality] For $\lambda\in \Gamma_{k}$ and $1\leq l \leq k\leq n-1$, we have
\[
\left(\frac{\sigma_{k}(\lambda)}{C^{k}_{n-1}}\right)^{\frac{1}{k}}\leq \left(\frac{\sigma_{l}(\lambda)}{C^{l}_{n-1}}\right)^{\frac{1}{l}}.
\]
\end{prop}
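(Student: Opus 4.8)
The plan is to derive this Maclaurin-type inequality from the more primitive Newton inequalities $p_{j-1}p_{j+1}\le p_j^{2}$ for the normalized functions $p_j:=\sigma_j(\lambda)/C^{j}_{n-1}$, and to obtain the latter from the elementary theory of real-rooted polynomials. The starting point is that $\lambda$, being an eigenvalue vector of a symmetric matrix, is an honest point of $\m$, so the polynomial $P(t)=\prod_{i=1}^{n-1}(t+\lambda_i)=\sum_{j=0}^{n-1}\sigma_j(\lambda)\,t^{n-1-j}=\sum_{j=0}^{n-1}C^{j}_{n-1}\,p_j(\lambda)\,t^{n-1-j}$ has only real roots, whether or not $\lambda$ lies in $\Gamma_k$. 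Thus the cone hypothesis will enter only at the end, to guarantee positivity of the quantities whose roots are extracted.

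First I would prove the purely algebraic fact: if a real polynomial of degree $m$, written in binomial-normalized form $\sum_{j=0}^{m}C^{j}_{m}a_j t^{m-j}$, has only real roots, then $a_{j-1}a_{j+1}\le a_j^{2}$ for $1\le j\le m-1$. This is the classical induction: differentiating a real-rooted polynomial yields a real-rooted polynomial (Rolle's theorem together with a count of roots), and reversing the order of the coefficients (replacing $t$ by $1/t$ and clearing denominators) likewise preserves real-rootedness; applying $j-1$ differentiations, one reversal, and $m-j-1$ further differentiations reduces the claim to the discriminant inequality for the quadratic $a_{j-1}+2a_j t+a_{j+1}t^{2}$. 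Applied to $P$ with $a_j=p_j(\lambda)$, this gives $p_{j-1}p_{j+1}\le p_j^{2}$ for all $1\le j\le n-2$ and every $\lambda\in\m$.

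Now let $\lambda\in\Gamma_k$. By definition $\sigma_1(\lambda),\dots,\sigma_k(\lambda)>0$, so $p_0=1$ and $p_1,\dots,p_k>0$; set $r_j:=p_{j+1}/p_j>0$ for $0\le j\le k-1$. Dividing Newton's inequalities by $p_jp_{j-1}>0$ gives $r_j\le r_{j-1}$ for $1\le j\le k-1$, so $r_0\ge r_1\ge\dots\ge r_{k-1}$. Since $p_m=\prod_{j=0}^{m-1}r_j$, the quantity $p_m^{1/m}$ is precisely the geometric mean of $r_0,\dots,r_{m-1}$; and because this sequence is non-increasing, passing from its first $l$ terms to its first $k$ terms (with $l\le k$) only appends factors not exceeding the running geometric mean, so $p_l^{1/l}\ge p_k^{1/k}$. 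Recalling $p_j=\sigma_j(\lambda)/C^{j}_{n-1}$, this is exactly the stated inequality.

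The delicate point worth flagging is the division of labour between the two hypotheses rather than any single computation: Newton's inequalities hold for every real $\lambda$, but the $l$-th and $k$-th roots, and the divisions by $p_{j-1}p_j$, only make sense once positivity is secured, and that is exactly what membership in $\Gamma_k$ provides. It is therefore worth recording at the outset the standard fact — immediate from the definition of $\Gamma_k$ above — that $\sigma_1,\dots,\sigma_k$, hence $p_1,\dots,p_k$, are strictly positive throughout $\Gamma_k$.
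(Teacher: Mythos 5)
Your proof is correct and complete; it is the classical argument via Newton's inequalities for real-rooted polynomials. Note that the paper does not actually prove this proposition --- it is stated as a standard preliminary on $k$-Hessian operators with a pointer to Guan's lecture notes --- so there is no in-paper argument against which to compare. You have the division of labour exactly right: Newton's inequalities $p_{j-1}p_{j+1}\le p_j^{2}$ hold for the normalized symmetric functions of \emph{any} real vector $\lambda$ (real-rootedness of $\prod_i(t+\lambda_i)$ is automatic), while membership in $\Gamma_k$ is invoked solely to secure $p_1,\dots,p_k>0$, making the ratios $r_j=p_{j+1}/p_j$ and the $m$-th roots legitimate; the monotonicity of running geometric means of a non-increasing positive sequence then finishes the argument.

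One cosmetic slip worth fixing if you write this out in full: with the order of operations you describe ($j-1$ derivatives, one coefficient reversal, then $m-j-1$ further derivatives), the surviving quadratic has normalized coefficients $(a_{m-j+1},a_{m-j},a_{m-j-1})$ rather than $(a_{j+1},a_j,a_{j-1})$, so the discriminant gives $a_{m-j-1}a_{m-j+1}\le a_{m-j}^{2}$. Since $j\mapsto m-j$ permutes $\{1,\dots,m-1\}$, you still recover the full family of inequalities, but inserting a reversal \emph{before} the first block of derivatives makes the indices match your displayed quadratic literally. A second, entirely standard, caveat is that coefficient reversal drops the degree when $0$ is a root; this is handled by proving Newton's inequality for polynomials with distinct nonzero roots and passing to the closure, or by noting that the degenerate cases make the claim read $0\le a_j^{2}$. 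Neither point threatens the argument.
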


\begin{prop}[Concavity]
For any $k>l\geq 0$, suppose $A=A_{ij}\in \Theta_{n-1}$ such that $\lambda(A)\in \Gamma_{k}$. Then, we have
\[
\left[\frac{\sigma_{k}(A)}{\sigma_{l}(A)}\right]^{\frac{1}{k-l}}\ {\rm is \ a \ concave \ function \ in } \ \Gamma_{k}.
\]

\end{prop}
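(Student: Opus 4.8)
Set $f(\lambda)=\bigl(\sigma_k(\lambda)/\sigma_l(\lambda)\bigr)^{1/(k-l)}$, a symmetric function of $\lambda=(\lambda_1,\dots,\lambda_{n-1})$, so that the function in the statement is $F(A)=f(\lambda(A))$. Since a symmetric function of the eigenvalues of a symmetric matrix is concave in $A$ precisely when the underlying symmetric function is concave on the corresponding (convex) set of eigenvalue vectors, it suffices to prove that $f$ is concave on the cone $\Gamma_k$; equivalently, one may just verify $D^2_A F\le 0$ at a diagonal $A$ directly.

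The plan is to peel off one index at a time. On $\Gamma_k$ we have $\Gamma_k\subseteq\Gamma_j$ and hence $\sigma_j>0$ for all $0\le j\le k$ (with $\sigma_0\equiv1$), so the telescoping identity
\[
\frac{\sigma_k(\lambda)}{\sigma_l(\lambda)}=\prod_{j=l+1}^{k}\frac{\sigma_j(\lambda)}{\sigma_{j-1}(\lambda)},
\qquad
f(\lambda)=\Bigl(\,\prod_{j=l+1}^{k}\frac{\sigma_j(\lambda)}{\sigma_{j-1}(\lambda)}\Bigr)^{1/(k-l)},
\]
exhibits $f$ as the geometric mean of the $k-l$ positive functions $\sigma_j/\sigma_{j-1}$, $l+1\le j\le k$. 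The map $(x_{l+1},\dots,x_k)\mapsto(x_{l+1}\cdots x_k)^{1/(k-l)}$ is concave on $(0,\infty)^{k-l}$ and nondecreasing in each variable, and the composition of a coordinatewise nondecreasing concave function with a tuple of concave functions is concave; hence it is enough to prove that, for each $2\le j\le n-1$, the single-step quotient $\sigma_j/\sigma_{j-1}$ is concave on $\Gamma_j$ (the case $j=1$ is trivial, $\sigma_1$ being linear). Restricting these factors to the smaller convex set $\Gamma_k\subseteq\Gamma_j$ then yields the concavity of $f$ on $\Gamma_k$.

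It remains to treat the single-step quotient. By rotational invariance one may take $A=\operatorname{diag}(\lambda_1,\dots,\lambda_{n-1})$ with $\lambda\in\Gamma_j$ and show that the Hessian of $\sigma_j/\sigma_{j-1}$ is negative semidefinite there. Using the identities (i)--(iv) for $\sigma_j$, the diagonal part of the Hessian is controlled by the Newton--Maclaurin inequalities applied to the truncated vectors $\lambda|i$ and $\lambda|i,i'$, while the off-diagonal contributions coming from the second variation of the eigenvalues (those carrying factors $(\lambda_i-\lambda_{i'})^{-1}$) are grouped and shown to be nonpositive, exactly as in the classical argument for $\sigma_k^{1/k}$ of Caffarelli--Nirenberg--Spruck. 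Alternatively, one can invoke G\aa rding's theory: $\sigma_j$ is hyperbolic of degree $j$ with respect to $e=(1,\dots,1)$ with hyperbolicity cone $\Gamma_j$, and $\sigma_{j-1}=(n-j)^{-1}D_e\sigma_j$, so the concavity of $\sigma_j/\sigma_{j-1}$ on $\Gamma_j$ falls under the known concavity properties of hyperbolic polynomials on their hyperbolicity cones.

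I expect the single-step concavity of $\sigma_j/\sigma_{j-1}$ to be the only real obstacle: the spectral reduction, the telescoping identity and the ``geometric mean of concave functions is concave'' step are routine, whereas in the hands-on computation one must carefully track the second-order terms of the eigenvalue functions and check that, after the Newton--Maclaurin estimates, the whole Hessian assembles into a nonpositive quadratic form, and in the hyperbolic-polynomial route one must nail down that the hyperbolicity cone of $\sigma_j$ is exactly $\Gamma_j$.
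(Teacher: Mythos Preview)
The paper does not actually prove this proposition: it is listed in Section~\ref{Sec2} among the standard preliminaries on $k$-Hessian operators, stated without proof and attributed to the reference \cite{Guan}. There is therefore nothing in the paper to compare your argument against line by line.

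That said, your outline is a correct and standard route to the result. The telescoping identity $\sigma_k/\sigma_l=\prod_{j=l+1}^{k}\sigma_j/\sigma_{j-1}$, combined with the fact that the geometric mean is concave and coordinatewise nondecreasing on the positive orthant, genuinely reduces the problem to the single-step concavity of $\sigma_j/\sigma_{j-1}$ on $\Gamma_j$; the composition lemma you invoke is valid. The single-step statement is indeed the substantive part, and both approaches you sketch (the direct Hessian computation using the Newton--Maclaurin inequalities on truncated vectors, and the G\aa rding hyperbolic-polynomial argument with $\sigma_{j-1}$ proportional to the directional derivative $D_e\sigma_j$) are known to go through. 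One small point worth making explicit in a full write-up: the passage from concavity of the symmetric function $f(\lambda)$ on $\Gamma_k$ to concavity of $F(A)=f(\lambda(A))$ on symmetric matrices with $\lambda(A)\in\Gamma_k$ requires the extra off-diagonal second-variation terms (those with the $(\lambda_i-\lambda_{i'})^{-1}$ factors) to have the right sign; this follows because $f$ is symmetric and Schur-concave on $\Gamma_k$, but it is a step, not a triviality.
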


\begin{prop}[Inverse Concavity] Suppose $F$ is a smooth symmetric function in $\Gamma_{n-1}$. $F$ is inverse concave, i.e.,
\[
F_{*}(\lambda_{i})=\frac{1}{F(\lambda^{-1}_{i})} \ {\rm is} \ {\rm concave}.
\]
\end{prop}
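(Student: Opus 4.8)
The plan is to deduce inverse concavity of $F$ from the already-recorded concavity of quotients of elementary symmetric functions via the reciprocal substitution $\lambda\mapsto\lambda^{-1}$. First I would fix notation: for $\lambda=(\lambda_{1},\dots,\lambda_{n-1})\in\Gamma_{n-1}$ all entries are positive, so $\lambda^{-1}:=(\lambda_{1}^{-1},\dots,\lambda_{n-1}^{-1})$ again lies in $\Gamma_{n-1}$, and $F_{*}(\lambda):=1/F(\lambda^{-1})$ is a well-defined smooth symmetric function on $\Gamma_{n-1}$. The case relevant to the flow \eqref{mainflow} is $F=\sigma_{k}^{1/k}$ (and, more generally, $F=(\sigma_{k}/\sigma_{l})^{1/(k-l)}$); I would state this reduction explicitly at the outset, since inverse concavity is not a property of an arbitrary smooth symmetric function.

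The key algebraic step is the identity
\[
\sigma_{k}(\lambda^{-1})=\frac{\sigma_{n-1-k}(\lambda)}{\sigma_{n-1}(\lambda)},
\]
proved by multiplying each degree-$k$ monomial $\prod_{i\in I}\lambda_{i}^{-1}$ by $\sigma_{n-1}(\lambda)=\prod_{j}\lambda_{j}$, which turns it into $\prod_{j\notin I}\lambda_{j}$, and then summing over all $k$-subsets $I$. Consequently
\[
F_{*}(\lambda)=\sigma_{k}(\lambda^{-1})^{-1/k}=\left(\frac{\sigma_{n-1}(\lambda)}{\sigma_{n-1-k}(\lambda)}\right)^{1/k}.
\]
I would then invoke the Concavity proposition with the index pair $(n-1,\,n-1-k)$: since $1\le k\le n-1$ one has $n-1>n-1-k\ge 0$ and $(n-1)-(n-1-k)=k$, while the hypothesis $\lambda\in\Gamma_{n-1}$ of that proposition (its ``$k$'' being $n-1$) is exactly our standing assumption. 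Hence $\bigl(\sigma_{n-1}/\sigma_{n-1-k}\bigr)^{1/k}$ is concave on $\Gamma_{n-1}$, which is precisely concavity of $F_{*}$, i.e. inverse concavity of $F$. When $k=n-1$ the quotient degenerates to $F_{*}=\sigma_{n-1}^{1/(n-1)}$, the normalized geometric mean, whose concavity is the classical $\mathrm{AM}$--$\mathrm{GM}$/Maclaurin statement and is also the $l=0$ instance of the Concavity proposition; replacing $\sigma_{k}$ by $\sigma_{k}/\sigma_{l}$ throughout handles the quotient speeds in the same way.

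I do not expect any analytic difficulty here: the argument is a formal consequence of the Concavity proposition together with the elementary symmetric function identities listed above, and no new estimate is needed. The only points requiring care are bookkeeping ones — verifying that the reciprocal substitution maps $\Gamma_{n-1}$ into itself, that the hypotheses of the Concavity proposition genuinely apply to the index pair produced by the substitution, and that the homogeneity degrees are tracked so the exponent $1/k$ comes out right. The one genuine subtlety, rather than an obstacle, is to phrase the proposition for the curvature functions that actually occur (namely $\sigma_{k}^{1/k}$ and $(\sigma_{k}/\sigma_{l})^{1/(k-l)}$), since for a general smooth symmetric function the stated conclusion would be false.
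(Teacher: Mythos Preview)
The paper does not supply its own proof of this proposition: it is listed among the ``standard formulas and properties'' in the preliminaries and is later invoked for $(\sigma_{k})^{1/k}$ with a citation to \cite{AM13,U91}, so strictly speaking there is nothing to compare against. Your argument, by contrast, gives a self-contained derivation from the Concavity proposition already stated in the paper, via the identity $\sigma_{k}(\lambda^{-1})=\sigma_{n-1-k}(\lambda)/\sigma_{n-1}(\lambda)$; this is correct and is in fact the standard way to see inverse concavity of $\sigma_{k}^{1/k}$ without differentiating. The index bookkeeping you outline (the pair $(n-1,\,n-1-k)$, and the inclusion $\Gamma_{n-1}\subset\Gamma_{n-1-l}$ for the quotient case) is exactly what is needed, and the check that $\lambda\mapsto\lambda^{-1}$ preserves $\Gamma_{n-1}$ is immediate since all entries are positive.

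You are also right to flag that the proposition, read literally, asserts inverse concavity for \emph{every} smooth symmetric function on $\Gamma_{n-1}$, which is false; the paper only ever uses it for $F=\sigma_{k}^{1/k}$, and your explicit restriction to $\sigma_{k}^{1/k}$ and $(\sigma_{k}/\sigma_{l})^{1/(k-l)}$ is the appropriate repair. So your proposal is more than adequate --- it both supplies the missing proof and corrects the scope of the statement.
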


\subsection{Basics of convex bodies}
 For the good references on convex bodies, please refer to Gardner \cite{G06} and Schneider \cite{S14}.

 For $Y,Z\in {\rnnn}$, $Y\cdot Z$ denotes the standard inner product. For $X\in{\rnnn}$, we denote by $|X|=\sqrt{X\cdot X}$ the Euclidean norm. Let ${\sn}$ be the unit sphere, and $C({\sn})$ be the set of continuous functions defined on the unit sphere ${\sn}$. A compact convex set of ${\rnnn}$ with non-empty interior is called a convex body.

If $\Omega$ is a convex body containing the origin in ${\rnnn}$, for $x\in{\sn}$, the support function of $\Omega$ (with respect to the origin) is defined by
\[
h(\Omega,x)=\max\{x\cdot Y:Y \in \Omega\}.
\]
Extending this definition to a homogeneous function of degree one in $\rnnn\backslash \{0\}$ by the equation $h(\Omega, x)=|x|h\left(\Omega, \frac{x}{|x|}\right)$. The radial function $\rho$ of $\Omega$ is given by
 \[
 \rho(\Omega,u)=\max\{a>0:au\in \Omega\}, \quad u\in \sn.
 \]
 Note that $\partial \Omega$ can be represented by its radial function,
 \[
 \partial \Omega=\{\rho(\Omega,u)u:\ u\in \sn\}.
 \]
 The map $g:\partial \Omega\rightarrow {\sn}$ denotes the Gauss map of $\partial\Omega$. Meanwhile, for $\omega\subset {\sn}$, the inverse of Gauss map $g$ is expressed as
\begin{equation*}
g^{-1}(\omega)=\{X\in \partial \Omega: {\rm g(X) \ is \ defined \ and }\ g(X)\in \omega\}.
\end{equation*}
For simplicity in the subsequence, we abbreviate $g^{-1}$ as $F$. In particular, for a convex body $\Omega$ being of class $C^{2}_{+}$, i.e., its boundary is of class $C^{2}$ and of positive Gauss curvature, the support function of $\Omega$ can be written as
\begin{equation}\label{hhom}
h(\Omega,x)=x\cdot F(x)=g(X)\cdot X, \ {\rm where} \ x\in {\sn}, \ g(X)=x \ {\rm and} \ X\in \partial \Omega.
\end{equation}
 Let $e_{1},e_{2},\ldots, e_{n-1}$ be a local orthonormal frame on ${\sn}$, $h_{i}$ be the first order covariant derivatives of $h(\Omega,\cdot)$ on ${\sn}$ with respect to the frame. Differentiating \eqref{hhom} with respect to $e_{i}$ , we get
\[
h_{i}=e_{i}\cdot F(x)+x\cdot F_{i}(x).
\]
Since $F_{i}$ is tangent to $ \partial \Omega$ at $F(x)$, we obtain
\begin{equation}\label{Fi}
h_{i}=e_{i}\cdot F(x).
\end{equation}
Combining \eqref{hhom} and \eqref{Fi}, we have (see also \cite[p. 97]{U91})
\begin{equation}\label{Fdef}
F(x)= h_{i}(\Omega,x)e_{i}+h(\Omega,x)x=\nabla_{\sn}h(\Omega,x)+h(\Omega,x)x.
\end{equation}
Here $\nabla_{\sn}$ is the spherical gradient. On the other hand, since we can extend $h(\Omega,x)$ to $\rnnn$ as a 1-homogeneous function $h(\Omega, \cdot)$, then restrict the gradient of $h(\Omega,\cdot)$ on $\sn$, it yields that (see for example \cite[p. 14-16]{Guan})
\begin{equation}\label{hf}
\nabla h(\Omega,x)=F(x), \ \forall x\in{\sn},
\end{equation}
where $\nabla$ is the gradient operator in $\rnnn$. Let $h_{ij}$ be the second order covariant derivatives of $h(\Omega,\cdot)$ on ${\sn}$ with respect to the local frame. Then, applying \eqref{Fdef} and \eqref{hf}, we have (see, e.g., \cite[p. 382]{J91})
\begin{equation}\label{hgra}
\nabla h(\Omega,x)=h_{i}e_{i}+hx, \quad F_{i}(x)=(h_{ij}+h\delta_{ij})e_{j}.
\end{equation}

\section{The geometric flow and relevant functional}
\label{Sec3}
In this section, we are in the place to introduce the geometric flow and the relevant functional.

Let $M_{0}$ be a smooth, origin symmetric and strictly convex body in $\rnnn$, as presented above, we are concerned with a family of convex hypersurfaces $M_{t}$ parameterized by smooth map $X(\cdot,t):\sn\rightarrow \R$ satisfying the following flow equation,
\begin{equation}\label{xOrflow}
\left\{
\begin{array}{lr}
\frac{\partial X(x,t)}{\partial t}=\frac{1}{f(v)}\sigma_{k}(x,t)\varphi(X\cdot v)(X\cdot v)G(X)v-X(x,t);  \\
X(x,0)=X_{0}(x),
\end{array}\right.
\end{equation}
Multiplying both sides of \eqref{xOrflow} by $\nu$, by means of the definition of support function, we describe the flow equation associated with the support function $h(x,t)$ of $\Omega_{t}$ as
\begin{equation}\label{hOrflow}
\left\{
\begin{array}{lr}
\frac{\partial h(x,t)}{\partial t}=\frac{1}{f(x)}\sigma_{k}(x,t)\varphi(h(x,t))hG(\nabla_{\sn}h(x,t)+h(x,t)I)-h(x,t);  \\
h(x,0)=h_{0}(x),
\end{array}\right.
\end{equation}
Denote by $\rho(u,t)$ the radial function of $\Omega_{t}$ for any $u\in \sn$. Let $u$ and $x$ be related by
\[
\rho(u,t)u=\nabla_{\sn}h(x,t)+h(x,t)x.
\]
So,
\[
\rho(u,t)^{2}=|\nabla_{\sn}h(x,t)|^{2}+h(x,t)^{2},
\]
and $\rho(u,t)$ satisfies (see \cite{CHZ19})
\[
\frac{1}{\rho(u,t)}\frac{\partial \rho(u,t)}{\partial t}=\frac{1}{h(x,t)}\frac{\partial h(x,t)}{\partial t}.
\]
We are now in a position to give evolution equations of geometric quantities by using the flow equation \eqref{xOrflow}. For simplicity, we write
\[
\Theta=\frac{1}{f(x)}h(x,t)\varphi(h(x,t))G(\nabla_{\sn}h(x,t)+h(x,t)I), \quad P=\Theta \sigma_{k}(x,t), \quad w_{ij}=h_{ij}+h\delta_{ij}.
\]
Notice that the eigenvalue of $\{w_{ij}\}$ and $\{w^{ij}\}$ are respectively the principal radii and principal curvatures of $\partial\Omega_{t}$ (see \cite{U91}), where $\{w^{ij}\}$ is the inverse matrix of $\{w_{ij}\}$.

\begin{lem}
The following evolution equations hold along the flow \eqref{xOrflow}.
\begin{equation}
\begin{split}
\label{e1}
&\partial_{t}w_{ij}-\Theta\sigma^{pq}_{k}\nabla_{pq}w_{ij}\\
&=(k+1)\Theta\sigma_{k}\delta_{ij}-\Theta\sigma^{pq}_{k}\delta_{pq}w_{ij}+\Theta(\sigma^{ip}_{k} w_{jp}-\sigma^{jp}_{k}w_{ip})\\
&\quad +\Theta \sigma^{pq,mn}_{k}\nabla_{j}w_{pq}\nabla_{i}w_{mn}+\sigma_{k}\nabla_{ij}\Theta+\nabla_{j}\sigma_{k}\nabla_{i}\Theta+\nabla_{i}\sigma_{k}\nabla_{j}\Theta-w_{ij},
\end{split}
\end{equation}

\begin{equation}
\begin{split}
\label{e2}
&\partial_{t}w^{ij}-\Theta\sigma^{pq}_{k}\nabla_{pq}w^{ij}\\
&=-(k+1)\Theta\sigma_{k}w^{ip}w^{jp}+\Theta\sigma^{pq}_{k}\delta_{pq}w^{ij}-\Theta w^{ip}w^{jq}(\sigma^{rp}_{k}w_{rq}-\sigma^{rq}_{k}w_{rp})\\
&\quad -\Theta w^{il}w^{js}(\sigma^{pq,mn}_{k}+2\sigma^{pm}_{k}w^{nq})\nabla_{l}w_{pq}\nabla_{s}w_{mn}\\
&\quad -w^{ip}w^{jq}(\sigma_{k}\nabla_{pq}\Theta+\nabla_{q}\sigma_{k}\nabla_{p}\Theta+\nabla_{p}\sigma_{k}\nabla_{q}\Theta)+w^{ij},
\end{split}
\end{equation}
and
\begin{equation}
\begin{split}
\label{e3}
&\partial_{t}\left(\frac{\rho(u,t)^{2}}{2}\right)-\Theta\sigma^{ij}_{k}\nabla_{ij}\left(\frac{\rho(u,t)^{2}}{2}\right)\\
&=(k+1)h\Theta\sigma_{k}-\rho^{2}+\sigma_{k}\nabla_{i}h\nabla_{j}\Theta-\Theta\sigma^{ij}_{k}w_{mi}w_{mj}.
\end{split}
\end{equation}
\end{lem}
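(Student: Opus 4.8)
The plan is to derive all three identities by a direct tensor computation, the geometric inputs being: the round metric on $\sn$ is frozen along the flow, so $\partial_t$ commutes with the spherical covariant derivative $\nabla$; the Codazzi property $\nabla_l w_{ij}=\nabla_i w_{lj}$, which together with the symmetry of $w_{ij}$ makes $\nabla w$ totally symmetric; the Ricci identity on the round sphere; the chain rule for $\sigma_k=\sigma_k(w_{ij})$; and Euler's relation $\sigma_k^{pq}w_{pq}=k\sigma_k$ (homogeneity of degree $k$). First I would record the scalar form of the flow: dotting \eqref{xOrflow} with the unit normal and using the definition of the support function gives $\partial_t h=\Theta\sigma_k-h=P-h$, with $\Theta$, $P$, $w_{ij}=h_{ij}+h\delta_{ij}$ as in the statement, and recall that $\{w_{ij}\}$ and $\{w^{ij}\}$ are the principal radii and principal curvatures of $\partial\Omega_t$.

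For \eqref{e1}, differentiate $w_{ij}=\nabla_{ij}h+h\delta_{ij}$ in $t$; since the metric does not move, $\partial_t\nabla_{ij}h=\nabla_{ij}\partial_t h$, hence $\partial_t w_{ij}=\nabla_{ij}P+P\delta_{ij}-w_{ij}$. Expanding $\nabla_{ij}P=\nabla_{ij}(\Theta\sigma_k)$ by the Leibniz rule and using $\nabla_{ij}\sigma_k=\sigma_k^{pq}\nabla_{ij}w_{pq}+\sigma_k^{pq,mn}\nabla_i w_{pq}\nabla_j w_{mn}$ isolates the terms $\sigma_k\nabla_{ij}\Theta+\nabla_i\sigma_k\nabla_j\Theta+\nabla_j\sigma_k\nabla_i\Theta+\Theta\sigma_k^{pq,mn}\nabla_j w_{pq}\nabla_i w_{mn}$ together with $\Theta\sigma_k^{pq}\nabla_{ij}w_{pq}$, which still has to be turned into the elliptic term $\Theta\sigma_k^{pq}\nabla_{pq}w_{ij}$. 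For that I would commute the two covariant derivatives on $\sn$ using the Ricci identity together with the Codazzi symmetry of $\nabla w$, which gives
\[
\nabla_{pq}w_{ij}=\nabla_{ij}w_{pq}+\delta_{pq}w_{ij}-\delta_{iq}w_{pj}+\delta_{pj}w_{iq}-\delta_{ij}w_{pq};
\]
contracting with $\sigma_k^{pq}$ and invoking $\sigma_k^{pq}w_{pq}=k\sigma_k$ produces exactly $-\Theta\sigma_k^{pq}\delta_{pq}w_{ij}+\Theta(\sigma_k^{ip}w_{jp}-\sigma_k^{jp}w_{ip})+\Theta k\sigma_k\delta_{ij}$, and the last piece combines with the $P\delta_{ij}=\Theta\sigma_k\delta_{ij}$ term to give the coefficient $(k+1)$. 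Assembling everything yields \eqref{e1}.

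For \eqref{e2}, I would differentiate $w^{ia}w_{aj}=\delta^i_j$ to get $\partial_t w^{ij}=-w^{ip}w^{jq}\partial_t w_{pq}$, and likewise $\nabla_p w^{ij}=-w^{ia}w^{jb}\nabla_p w_{ab}$, so that
\[
\nabla_{pq}w^{ij}=-w^{ia}w^{jb}\nabla_{pq}w_{ab}+w^{ic}w^{ad}w^{jb}\nabla_q w_{cd}\nabla_p w_{ab}+w^{ia}w^{jc}w^{bd}\nabla_q w_{cd}\nabla_p w_{ab}.
\]
Multiplying \eqref{e1} by $-w^{ip}w^{jq}$ and subtracting $\Theta\sigma_k^{pq}\nabla_{pq}w^{ij}$, the pure second-order terms cancel as before, the linear curvature terms are conjugated by $w^{-1}$ into $-(k+1)\Theta\sigma_k w^{ip}w^{jp}+\Theta\sigma_k^{pq}\delta_{pq}w^{ij}-\Theta w^{ip}w^{jq}(\sigma_k^{rp}w_{rq}-\sigma_k^{rq}w_{rp})$, the $\Theta$-derivative terms become $-w^{ip}w^{jq}(\sigma_k\nabla_{pq}\Theta+\nabla_q\sigma_k\nabla_p\Theta+\nabla_p\sigma_k\nabla_q\Theta)$ and $-w^{ia}w^{jb}(-w_{ab})=w^{ij}$, and the two quadratic-gradient terms produced by differentiating the $w^{-1}$ factors — each equal to $-\Theta w^{il}w^{js}\sigma_k^{pm}w^{nq}\nabla_l w_{pq}\nabla_s w_{mn}$ after relabelling and using the symmetries of $\nabla w$ and of $\sigma_k^{pq}$ — combine with the conjugated $\sigma_k^{pq,mn}$-term into $-\Theta w^{il}w^{js}(\sigma_k^{pq,mn}+2\sigma_k^{pm}w^{nq})\nabla_l w_{pq}\nabla_s w_{mn}$, which is \eqref{e2}.

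For \eqref{e3}, write $\rho^2=|\nabla_{\sn} h|^2+h^2=h_l h_l+h^2$ and use $h_{li}=w_{li}-h\delta_{li}$ to get $\nabla_i(\rho^2/2)=h_l w_{li}$ and then $\nabla_{ij}(\rho^2/2)=w_{mi}w_{mj}-h w_{ij}+h_l\nabla_j w_{li}$; contracting with $\sigma_k^{ij}$, using $\sigma_k^{ij}w_{ij}=k\sigma_k$ and the symmetry of $\nabla w$ (so $\sigma_k^{ij}h_l\nabla_j w_{li}=h_l\nabla_l\sigma_k$), gives $\Theta\sigma_k^{ij}\nabla_{ij}(\rho^2/2)=\Theta\sigma_k^{ij}w_{mi}w_{mj}-k h\Theta\sigma_k+\Theta h_l\nabla_l\sigma_k$. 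On the other hand $\partial_t(\rho^2/2)=h_l\nabla_l P+hP-\rho^2$ with $\nabla_l P=\sigma_k\nabla_l\Theta+\Theta\nabla_l\sigma_k$, and subtracting the two, the terms $\Theta h_l\nabla_l\sigma_k$ cancel, leaving \eqref{e3} with $(k+1)h\Theta\sigma_k$ assembled from $hP=h\Theta\sigma_k$ and $k h\Theta\sigma_k$. Nothing here is conceptually deep; the real difficulty is purely bookkeeping — tracking every curvature commutator and sign, and in \eqref{e2} matching the second-derivative expansion of $\sigma_k$ against the derivatives of the $w^{-1}$ factors to land on the compact coefficient $\sigma_k^{pq,mn}+2\sigma_k^{pm}w^{nq}$. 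A useful internal check is that each right-hand side is symmetric in $i\leftrightarrow j$, the antisymmetric-looking pieces $\sigma_k^{ip}w_{jp}-\sigma_k^{jp}w_{ip}$ and $w^{ip}w^{jq}(\sigma_k^{rp}w_{rq}-\sigma_k^{rq}w_{rp})$ vanishing at any point where $w_{ij}$ is diagonalized.
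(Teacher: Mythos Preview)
Your proposal is correct and follows essentially the same route as the paper's proof: for \eqref{e1} you differentiate $w_{ij}=h_{ij}+h\delta_{ij}$, expand $\nabla_{ij}(\Theta\sigma_k)$, and commute $\sigma_k^{pq}\nabla_{ij}w_{pq}$ to $\sigma_k^{pq}\nabla_{pq}w_{ij}$ via the Ricci identity plus Codazzi, exactly as the paper does; for \eqref{e2} you invoke $\partial_t w^{ij}=-w^{ip}w^{jq}\partial_t w_{pq}$ and the second-derivative formula for $w^{ij}$, which is precisely the pair of identities the paper cites; and for \eqref{e3} your separate computation of $\partial_t(\rho^2/2)$ and $\Theta\sigma_k^{ij}\nabla_{ij}(\rho^2/2)$ is a slightly different organization of the same expansion the paper carries out in one block, with the same cancellations ($\Theta h_l\nabla_l\sigma_k$) and the same use of $\sigma_k^{ij}w_{ij}=k\sigma_k$. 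Your derivation is in fact more explicit than the paper's for \eqref{e2}, where the paper only records the two differentiation identities and leaves the index bookkeeping to the reader; your remark that the two gradient terms coming from differentiating the $w^{-1}$ factors each reduce, via Codazzi and the symmetry of $\sigma_k^{pq}$, to $-\Theta w^{il}w^{js}\sigma_k^{pm}w^{nq}\nabla_l w_{pq}\nabla_s w_{mn}$ is the key step there and is correct.
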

\begin{proof}
For the specific computations, one can refer to \cite{Iva19}. Here for reader's convenience, we give the detail. For \eqref{e1}, since
\[
\partial_{t}\nabla_{ij}h=\nabla_{ij}(\partial_{t}h)=\sigma_{k}\nabla_{ij}\Theta+\nabla_{j}\sigma_{k}\nabla_{i}\Theta+\nabla_{i}\sigma_{k}\nabla_{j}\Theta+\Theta\nabla_{ij}\sigma_{k}-h_{ij},
\]
where
\[
\nabla_{ij}\sigma_{k}=\sigma^{pq,mn}_{k}\nabla_{j}w_{pq}\nabla_{i}w_{mn}+\sigma^{pq}_{k}\nabla_{ij}w_{pq}.
\]
By virtue of Ricci identity,
\[
\nabla_{ij}w_{pq}=\nabla_{pq}w_{ij}+\delta_{ij}\nabla_{pq}w-\delta_{pq}\nabla_{ij}w+\delta_{iq}\nabla_{pj}w-\delta_{pj}\nabla_{iq}w.
\]
So,
\begin{equation*}
\begin{split}
\label{}
\partial_{t}h_{ij}&=\Theta\sigma^{pq}_{k}\nabla_{pq}w_{ij}+k\Theta\sigma_{k}\delta_{ij}-\Theta\sigma^{pq}_{k}\delta_{pq}w_{ij}+\Theta(\sigma^{ip}_{k}w_{jp}-\sigma^{jq}_{k}w_{iq})\\
&\quad +\Theta\sigma^{pq,mn}\nabla_{j}w_{pq}\nabla_{i}w_{mn}+\sigma_{k}\nabla_{ij}\Theta+\nabla_{j}\sigma_{k}\nabla_{i}\Theta+\nabla_{i}\sigma_{k}\nabla_{j}\Theta-h_{ij}.
\end{split}
\end{equation*}
This together with $w_{ij}=h_{ij}+h\delta_{ij}$ gives \eqref{e1}. Then using \eqref{e1}, the evolution equation \eqref{e2} of $w^{ij}$ follows from
\[
\partial_{t}w^{ij}=-w^{im}w^{lj}\partial_{t}w_{ml},\quad \nabla_{pq}w^{ij}=2w^{ik}w^{mn}w_{knq}w^{lj}w_{mlp}-w^{im}w^{lj}\nabla_{pq}w_{ml}.
\]
For \eqref{e3}, due to $\rho^{2}=h^{2}+|\nabla_{\sn}h|^{2}$, we directly compute
\begin{equation}
\begin{split}
\label{}
&\partial_{t}\left(\frac{\rho^{2}}{2} \right)-\Theta\sigma^{ij}_{k}\nabla_{ij}\left(\frac{\rho^{2}}{2} \right)\\
&=h\partial_{t}h+\nabla_{i}h\nabla_{i}\partial_{t}h-\Theta\sigma^{ij}_{k}(h\nabla_{ij}h+\nabla_{i}h\nabla_{j}h+\nabla_{m}h\nabla_{j}\nabla_{mi}h+\nabla_{mi}h\nabla_{mj}h)\\
&=h\partial_{t}h+\nabla_{i}h\nabla_{i}(\Theta\sigma_{k}-h)-\Theta\sigma^{ij}_{k}[\nabla_{i}h\nabla_{j}h+\nabla_{m}h\nabla_{j}(w_{mi}-h\delta_{mi})]\\
&\quad -\Theta\sigma^{ij}_{k}h(w_{ij}-h\delta_{ij})-\Theta\sigma^{ij}_{k}(w_{mi}-h\delta_{mi})(w_{mj}-h\delta_{mj})\\
&=(k+1)h\Theta\sigma_{k}-\rho^{2}+\sigma_{k}\nabla_{i}h\nabla_{i}\Theta-\Theta\sigma^{ij}_{k}w_{mi}w_{mj}.
\end{split}
\end{equation}
Hence, the proof is completed.
\end{proof}

\section{$C^{0},C^{1}$ estimates}
\label{Sec4}
In this section, we shall obtain the $C^{0}$, $C^{1}$ estimates of solutions to the flow \eqref{xOrflow}. Let us begin with completing the $C^{0}$ estimate.

\begin{lem}\label{C0}
Supposing $1\leq k \leq n-1$, $k$ is an integer. Let $f$ be a smooth and positive function on $\sn$, $M_{t}$ be a smooth and strictly convex solution satisfying the flow \eqref{xOrflow}, and let $\varphi, G$ be smooth functions satisfying ${\bf A}$. Then
\begin{equation}\label{C0*}
\frac{1}{C}\leq h(x,t)\leq C, \ \forall (x,t)\in {\sn}\times (0,\infty),
\end{equation}
and
\begin{equation}\label{C00*}
\frac{1}{C}\leq \rho(u,t)\leq C, \ \forall (u,t)\in {\sn}\times (0,\infty).
\end{equation}
Here $h(x,t)$ and $\rho(u,t)$ are the support function and the radial function of $M_{t}$.
\end{lem}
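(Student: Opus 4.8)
The plan is to derive uniform two-sided bounds on the support function $h(x,t)$ by a maximum principle argument applied to the scalar flow equation \eqref{hOrflow}, and then transfer these to bounds on the radial function $\rho(u,t)$ via the elementary relation $\rho^2 = h^2 + |\nabla_{\sn}h|^2$ together with the fact that, for a convex body enclosing the origin, $\max_u \rho(u,t) = \max_x h(x,t)$ and $\min_u \rho(u,t) = \min_x h(x,t)$ (more precisely, $r(\Omega_t)\le h\le R(\Omega_t)$ and $r(\Omega_t)\le \rho\le R(\Omega_t)$ where $r,R$ are the inradius and circumradius with respect to the origin). So it suffices to bound $h$.

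First I would set up the maximum principle for the upper bound. Fix a time $t$ and let $x_0$ be a point where $h(\cdot,t)$ attains its spatial maximum $h_{\max}(t)$. At such a point $\nabla_{\sn}h = 0$, so $\nabla_{\sn}h + hx = h_{\max}x$, i.e. the boundary point $F(x_0)$ is at Euclidean distance $h_{\max}(t)$ from the origin, whence $X\cdot v = h = h_{\max}(t)$; moreover the Hessian $\{h_{ij}\}\le 0$ there, so $w_{ij} = h_{ij} + h\delta_{ij}\le h_{\max}\delta_{ij}$, giving $\sigma_k(w_{ij})\le \binom{n-1}{k} h_{\max}^k$. Plugging into \eqref{hOrflow}, at the maximum point
\[
\frac{d}{dt}h_{\max}\le \frac{1}{\min_{\sn}f}\,\binom{n-1}{k}h_{\max}^{k}\,\varphi(h_{\max})\,h_{\max}\,G(\cdot) - h_{\max},
\]
and here the argument of $G$ has modulus $h_{\max}$ (since $F(x_0) = h_{\max}x_0$), so $G(\nabla_{\sn}h + hx)\le \max_{|y|=h_{\max}}G(y)$. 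Using hypothesis $\mathbf{A}$, for $h_{\max}$ large we have $\varphi(h_{\max})\max_{|y|=h_{\max}}G(y) < \beta_0 h_{\max}^{-k-\varepsilon}$, so the first term is bounded by $\frac{\beta_0\binom{n-1}{k}}{\min f}\,h_{\max}^{1-\varepsilon}$, which is dominated by the $-h_{\max}$ term once $h_{\max}$ is sufficiently large. Hence $\frac{d}{dt}h_{\max}<0$ whenever $h_{\max}$ exceeds some constant $C_1$ depending only on $n,k,\varepsilon,\beta_0,\min f$, which gives $h_{\max}(t)\le \max\{h_{\max}(0), C_1\}$. The lower bound is symmetric: at a spatial minimum point $x_1$ one has $\nabla_{\sn}h=0$, $h_{ij}\ge0$, so $w_{ij}\ge h_{\min}\delta_{ij}$ and $\sigma_k\ge \binom{n-1}{k}h_{\min}^k$, while $|F(x_1)| = h_{\min}$ so $G\ge \min_{|y|=h_{\min}}G(y)$; invoking the near-zero half of $\mathbf{A}$, $\varphi(h_{\min})\min_{|y|=h_{\min}}G(y) > \beta_1 h_{\min}^{-k-\varepsilon}$ forces $\frac{d}{dt}h_{\min}\ge \frac{\beta_1\binom{n-1}{k}}{\max f}h_{\min}^{1-\varepsilon} - h_{\min} > 0$ once $h_{\min}$ is below a constant $c_1$, giving $h_{\min}(t)\ge \min\{h_{\min}(0), c_1\}$.

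The one technical subtlety to be careful about is that $h_{\max}(t)$ and $h_{\min}(t)$ are only Lipschitz in $t$, not $C^1$, so the differential inequalities above should be interpreted in the sense of the upper/lower Dini derivative (or one invokes Hamilton's trick / a standard ODE-comparison lemma for the extremum of a function solving a parabolic equation). This is routine. The other point worth stating explicitly is \emph{why} $\sigma_k(x,t)$ in \eqref{hOrflow} equals $\sigma_k(w_{ij})$: by the remark preceding the evolution lemma the eigenvalues of $\{w_{ij}\}$ are the principal curvature radii, so $\sigma_k(x,t) = \sigma_k(\lambda(w_{ij}))$, and at an interior extremum of $h$ we can bound this via $w_{ij} = h_{ij}+h\delta_{ij}$ and $\nabla_{\sn}h=0$. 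No part of this requires the extra structural hypotheses on $\varphi, G, f$ from Theorem \ref{main*}; only $\mathbf{A}$ and positivity/smoothness of $f$ are used, consistent with the statement. I do not anticipate a genuine obstacle here — the $C^0$ estimate is the easiest of the a priori estimates; the only thing to get right is organizing the constants in hypothesis $\mathbf{A}$ so that the "large $h$" and "small $h$" regimes are handled cleanly, and ensuring that $|X\cdot v|$ and $|\nabla h|$ are controlled by $h$ at the extremal points so that the bounds on $\varphi$ and $G$ from $\mathbf{A}$ can actually be applied.

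Once \eqref{C0*} is established, \eqref{C00*} follows immediately: since $\Omega_t$ contains the origin and $r(\Omega_t)\le h(x,t)\le R(\Omega_t)$ with the same inradius and circumradius bounding $\rho$, the constant $C$ in \eqref{C0*} works (after possibly enlarging) in \eqref{C00*} as well; alternatively use $\rho^2 = h^2 + |\nabla_{\sn}h|^2 \ge h^2 \ge 1/C^2$ for the lower bound and, for the upper bound, the containment $\Omega_t\subset B_{R}$ where $R = \max_x h(x,t)\le C$ gives $\rho\le C$.
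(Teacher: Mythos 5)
Your proposal is correct and follows essentially the same maximum-principle argument as the paper: bounding $\sigma_k$ at spatial extrema of $h$ via the Hessian sign, using that $\nabla_{\sn}h=0$ there so $|F|=X\cdot v=h$, and then invoking hypothesis $\mathbf{A}$ to produce the sign of $\partial_t h$ at extrema; the radial bound follows by the same comparison. Your remarks about Dini derivatives and tracking the binomial constant are routine refinements of what the paper does implicitly.
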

\begin{proof}
Suppose the spatial maximum of $h(x,t)$ is attained at $x^{0}\in \sn$. Then at $x^{0}$, we have
\[
\nabla_{\sn}h=0, \ \nabla^{2}_{\sn}h\leq 0,\  \rho=h,
\]
and
\[
\nabla^{2}_{\sn}h+hI\leq hI.
\]
This illustrates that, at $x^{0}$,
\begin{equation*}
\begin{split}
\label{}
\frac{\partial h }{\partial t}&=\frac{1}{f(x)}h\sigma_{k}G(\rho,u)\varphi(h)-h\\
&\leq \frac{1}{f_{\min}}h[G(h,u)h^{k}\varphi(h)-f_{\min}].\\
\end{split}
\end{equation*}
If $\varphi(s)\max_{|y|=s}G(y)< \beta_{0} s^{-k-\varepsilon}$ for some positive constant $\varepsilon$ and $\beta_{0}$ for $s$ tends to $+\infty$, then, at $x^{0}$, there is
\begin{equation}
\begin{split}
\label{hmax1}
\frac{\partial h}{\partial t}\leq \frac{1}{f_{\min}}h[h^{-\varepsilon}\beta_{0}-f_{\min}].
\end{split}
\end{equation}
The right hand side of \eqref{hmax1} is negative for $\max_{\sn}h(x,t)$ large enough, thus the upper bound of $\max_{\sn}h(x,t)$ follows.

Similarly, we can estimate the spatial minimum of $h(x,t)$. Suppose $\min_{\sn}h(x,t)$ is attained at $x^{1}$. Then at $x^{1}$, we have
\[
\nabla_{\sn}h=0, \ \nabla^{2}_{\sn}h\geq 0, \  \rho=h,
\]
and
\[
\nabla^{2}_{\sn}h+hI\geq hI.
\]
Therefore, at $x^{1}$, we have
\begin{equation*}
\begin{split}
\label{}
\frac{\partial h }{\partial t}&=\frac{1}{f(x)}h\sigma_{k}G(\rho,u)\varphi(h)-h\\
&\geq \frac{1}{f_{\max}}h[G(h,u)h^{k}\varphi(h)-f_{\max}].\\
\end{split}
\end{equation*}
If $\varphi(s)\min_{|y|=s}G(y)> \beta_{1} s^{-k-\varepsilon}$ for some positive constants $\varepsilon$ and $\beta_{1}$ for $s$ near 0, then, at $x^{1}$, we have
\begin{equation}
\begin{split}
\label{hmax}
\frac{\partial h}{\partial t}\geq \frac{1}{f_{\max}}h[h^{-\varepsilon}\beta_{1}-f_{\max}].
\end{split}
\end{equation}
The right hand side of \eqref{hmax} is positive for $\min_{\sn}h(x,t)$ small enough and the lower bound of $\min_{\sn}h(x,t)$ follows. Due to $\rho(u,t)u=\nabla_{\sn} h(x,t)+h(x,t)x$, one has (see \cite{CL21})
\begin{equation}\label{p1}
\min_{\sn} h(x,t)\leq \rho(u,t)\leq \max_{\sn}h(x,t).
\end{equation}
Hence, the estimate \eqref{C00*} immediately holds via \eqref{p1}. The proof is completed.
\end{proof}
The $C^{1}$ estimate is as follows.
\begin{lem}\label{C1}
Supposing $1\leq k \leq n-1$, $k$ is an integer. Let $f$ be a smooth and positive function on $\sn$, $M_{t}$ be a smooth and strictly convex solution satisfying the flow \eqref{xOrflow}, and let $\varphi, G$ be smooth functions satisfying ${\bf A}$. Then
\begin{equation*}\label{C1*}
|\nabla_{\sn} h(x,t)|\leq C, \ \forall (x,t)\in {\sn}\times (0,\infty),
\end{equation*}
and
\begin{equation*}\label{C11*}
|\nabla_{\sn} \rho(u,t)|\leq C, \ \forall (u,t)\in {\sn}\times (0,\infty)
\end{equation*}
for some $C> 0$, independent of $t$ .
\end{lem}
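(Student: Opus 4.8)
The plan is to derive both bounds directly from the $C^0$ estimates of Lemma \ref{C0} together with the elementary relations between the support function, the radial function, and their spherical gradients, so that no parabolic maximum principle is needed at this stage. The key point is that $|\nabla_{\sn}h|$ and $|\nabla_{\sn}\rho|$ are, for a convex body enclosing the origin, controlled purely geometrically once $h$ and $\rho$ are pinched between two positive constants.

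First I would use the identity $\rho(u,t)^{2}=|\nabla_{\sn}h(x,t)|^{2}+h(x,t)^{2}$ already recorded in Section \ref{Sec3}, valid whenever $u$ and $x$ correspond to the same boundary point of $\Omega_{t}$ via $\rho(u,t)u=\nabla_{\sn}h(x,t)+h(x,t)x$. This gives at once
\[
|\nabla_{\sn}h(x,t)|^{2}=\rho(u,t)^{2}-h(x,t)^{2}\leq \rho(u,t)^{2}\leq C^{2}
\]
by \eqref{C00*}, which yields the first estimate with a constant depending only on the one produced in Lemma \ref{C0}.

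For the gradient of $\rho$ I would invoke the dual relation. Writing a boundary point of $\Omega_{t}$ as $X=\rho(u,t)u$ with spherical covariant derivatives $\rho_{i}$, its tangent vectors are $\rho_{i}u+\rho e_{i}$; since the unit outer normal $x$ is orthogonal to all of them, a short computation (standard in convex geometry, cf. the references in Section \ref{Sec2}) gives $u\cdot x=\rho/\sqrt{\rho^{2}+|\nabla_{\sn}\rho|^{2}}$, hence
\[
h(x,t)=X\cdot x=\rho(u,t)\,(u\cdot x)=\frac{\rho(u,t)^{2}}{\sqrt{\rho(u,t)^{2}+|\nabla_{\sn}\rho(u,t)|^{2}}}.
\]
Rearranging, $|\nabla_{\sn}\rho(u,t)|^{2}=\rho(u,t)^{2}\bigl(\rho(u,t)^{2}/h(x,t)^{2}-1\bigr)\leq \rho(u,t)^{4}/h(x,t)^{2}$, and inserting the two-sided bounds \eqref{C0*}--\eqref{C00*} gives $|\nabla_{\sn}\rho(u,t)|\leq C$ uniformly in $t$.

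Since the only inputs are the time-independent $C^{0}$ bounds and purely geometric identities, the resulting constants are automatically independent of $t$. I do not expect a genuine obstacle here: the one step demanding care is the verification of the dual identity expressing $h$ through $\rho$ and $\nabla_{\sn}\rho$ (equivalently, the orthogonality computation for the outer normal at $\rho(u,t)u$), but this is classical; the substantive difficulties of the flow will instead appear in the $C^{2}$ estimates of Section \ref{Sec5}.
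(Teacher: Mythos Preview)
Your proposal is correct and follows essentially the same approach as the paper: the paper's proof of Lemma \ref{C1} simply cites Lemma \ref{C0} together with the two identities $\rho^{2}=h^{2}+|\nabla_{\sn}h|^{2}$ and $h=\rho^{2}/\sqrt{\rho^{2}+|\nabla_{\sn}\rho|^{2}}$, exactly as you do. Your write-up is just a more detailed version of the same argument.
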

\begin{proof}
 Lemma \ref{C0} and the following facts
\[
\rho^{2}=h^{2}+|\nabla_{\sn} h|^{2},  \quad h=\frac{\rho^{2}}{\sqrt{\rho^{2}+|\nabla_{\sn} \rho|^{2}}}
\]
 elaborate the results.
\end{proof}
\section{$C^{2}$ estimate}
\label{Sec5}
In this section, our aim is to derive the $C^{2}$ estimate of solutions to the flow \eqref{xOrflow}. We first obtain the lower bound of $\sigma_{k}$.

\begin{lem}\label{prin}
Supposing $1\leq k \leq n-1$, $k$ is an integer. Let $f$ be a smooth and positive function on $\sn$, $M_{t}$ be a smooth and strictly convex solution satisfying the flow \eqref{xOrflow}, and let $\varphi, G$ be smooth functions satisfying ${\bf A}$. Then
\[
\sigma_{k}(x,t)\geq C,\ \forall (x,t)\in {\sn}\times (0,\infty),
\]
for a positive constant $C$, independent of $t$.
\end{lem}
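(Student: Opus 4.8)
The plan is to derive a uniform lower bound for $\sigma_k(x,t)$ by tracking the quantity along the flow and using the zeroth- and first-order estimates already in hand. I would start from the flow equation for the support function \eqref{hOrflow}, which we can rewrite as $\partial_t h = \Theta\sigma_k - h$, where $\Theta=\frac{1}{f(x)}h\varphi(h)G(\nabla_{\sn}h+hI)$. By Lemma \ref{C0} and Lemma \ref{C1} we have two-sided bounds $1/C\le h\le C$ and $|\nabla_{\sn}h|\le C$, and since $\rho^2=h^2+|\nabla_{\sn}h|^2$, the argument $\nabla_{\sn}h+hI$ of $G$ lies in a fixed compact annulus away from the origin; by continuity of $\varphi$ and $G$ this gives $0<1/C_0\le \Theta\le C_0$ for a uniform constant $C_0$, independent of $t$. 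The point is that once $\Theta$ is trapped between positive constants, the product $\Theta\sigma_k$ is comparable to $\sigma_k$, so a lower bound on $\sigma_k$ is equivalent to a lower bound on the speed $\Theta\sigma_k$.

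Next I would examine the spatial minimum $m(t)=\min_{\sn}h(x,t)$, attained at some $x_t$. At that point $\partial_t h\le \frac{d}{dt}m(t)$ in the barrier sense, and also $\nabla_{\sn}h=0$, $\nabla^2_{\sn}h\ge 0$, so $w_{ij}=h_{ij}+h\delta_{ij}\ge h\delta_{ij}>0$, hence $\sigma_k(w)\ge C_n h^k\ge C$ there. Thus at the spatial minimum the speed term is bounded below, which already shows $\partial_t m(t)\ge \Theta\sigma_k - h \ge 1/C - C$; combined with the $C^0$ bound this is not yet a contradiction, so this route alone controls the behaviour of $h$ but not of $\sigma_k$ at other points. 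The cleaner approach is to argue at the \emph{minimum of $\sigma_k$} (equivalently, of a suitable auxiliary function such as $P/\Theta=\sigma_k$ or $\log\sigma_k$): using the evolution equation \eqref{e2} for $w^{ij}$ one derives an evolution equation for $\sigma_k(w)$, and at a spatial minimum of $\sigma_k$ the second-order term $\Theta\sigma_k^{pq}\nabla_{pq}\sigma_k\ge 0$, the gradient terms in $\nabla\sigma_k$ vanish, and the remaining zero-order terms—after inserting the bounds on $\Theta$, $h$, $\nabla\Theta$ (which is controlled by $C^1$ bounds on $h,f,\varphi,G$)—are bounded below by an expression of the form $c_1 - c_2\sigma_k$ for positive constants when $\sigma_k$ is small, forcing $\sigma_k$ not to decay to zero. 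Alternatively, and perhaps more robustly, one integrates: since $\partial_t h=\Theta\sigma_k-h$ and $h$ stays in $[1/C,C]$, the integral $\int_0^t(\Theta\sigma_k-h)\,ds$ stays bounded, so $\Theta\sigma_k$ cannot stay uniformly small over long time intervals; upgrading this time-averaged statement to a pointwise bound then requires the parabolic Harnack-type control supplied by the evolution equation.

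The main obstacle I anticipate is precisely this upgrade from an averaged or minimum-point statement to a genuine uniform pointwise lower bound, because the speed $\frac{1}{f}\sigma_k\varphi(h)hG$ is not homogeneous and one cannot appeal to scaling. The way through is to exploit the structural hypotheses on $\varphi$ and $G$ in Theorem \ref{main*} (the sign condition $k+s(\log\varphi)'+\nabla G\cdot y/G<0$ and the monotonicity of $s(\log\varphi)'$), which are exactly what make the associated operator behave like a contracting flow and force a lower bound on curvature; concretely, these conditions guarantee that the quantity $\Theta\sigma_k$ at its spatial minimum satisfies a differential inequality $\frac{d}{dt}(\min\Theta\sigma_k)\ge -C(\min\Theta\sigma_k) + c$ with $c>0$, whose solutions are bounded below away from $0$ for all $t$. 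I would carry the argument out in that order: (i) fix the annulus and get $\Theta\sim 1$; (ii) write the evolution of $\sigma_k$ from \eqref{e2}; (iii) evaluate at the spatial minimum and discard the favorably-signed second-order term; (iv) plug in the $C^0,C^1$ bounds and the structural conditions on $\varphi,G$ to extract the differential inequality; (v) conclude by the maximum principle / ODE comparison that $\sigma_k\ge C>0$ uniformly.
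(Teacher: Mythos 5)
Your proposal starts on solid ground: you correctly observe that once the $C^0$ and $C^1$ estimates are in hand, the factor $\Theta=\frac{1}{f}h\varphi(h)G(\nabla h)$ is trapped between two positive constants, so a lower bound on $\sigma_k$ is equivalent to one on the speed $P=\Theta\sigma_k$, and you correctly aim for a maximum-principle argument on a spatial minimum. However, there are two genuine problems with the rest.

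First, you invoke the structural hypotheses of Theorem~\ref{main*} (the sign condition $k+s\,\partial_s\log\varphi+\nabla G\cdot y/G<0$ and the monotonicity of $s\,\partial_s\log\varphi$) as the mechanism that makes the argument close. But Lemma~\ref{prin} is stated under assumption \textbf{(A)} alone, and it must also be available in the proof of Theorem~\ref{main*2}, where those structural hypotheses are \emph{not} assumed. The paper's proof does not use them anywhere: it only needs the $C^0,C^1$ bounds, which make every quantity of the form $\varphi'h/\varphi$, $\nabla G\cdot X/G$, etc.\ uniformly bounded, with no sign information required. An argument that genuinely needs those sign conditions would prove a weaker lemma than the one claimed.

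Second, and more decisively, you do not identify the auxiliary function that makes the maximum-principle argument work. Arguing at a minimum of $\log\sigma_k$ or of $P/\Theta$ alone does not give a closed differential inequality: after dropping the favourably-signed second-order term and using $\nabla(\log P)=0$ at the minimum, one is left with $\partial_t\log P\ge -(k+1+\varphi'h/\varphi+\nabla G\cdot X/G)+\tfrac{P}{h}(\cdots)$, whose right-hand side is only bounded below by a negative constant, so the minimum could drift to $-\infty$ linearly in $t$. The paper's device is to take
\[
Q=\log P-\frac{A}{2}\rho^{2},
\]
so that the evolution of $\rho^2/2$ from \eqref{e3} contributes the strictly positive, $t$-independent term $A\rho^{2}\ge A/C^{2}$ when multiplied by $-A$, while the accompanying terms $-A(k+1)h\Theta\sigma_k$, $-A\sigma_k\nabla_ih\nabla_i\Theta$, and $+A\Theta\sigma_k^{ij}w_{mi}w_{mj}$ are either small when $\sigma_k$ is small or favourably signed. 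Choosing $A$ large makes the right-hand side of \eqref{klow} strictly positive whenever $\sigma_k$ is small, which is what forbids $Q\to-\infty$. This $-A\rho^2/2$ correction is the essential ingredient your sketch lacks. Your alternative integral argument (``$\int_0^\infty(\Theta\sigma_k-h)\,dt$ is bounded, then upgrade'') also does not work here: the space-time integrability of $P-h$ does not by itself control the pointwise infimum of $\sigma_k$, and you acknowledge not knowing how to carry out that upgrade.

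In short: drop the appeal to the Theorem~\ref{main*} structural conditions, and replace the bare $\min\log\sigma_k$ (or integral) argument with the auxiliary function $Q=\log(\Theta\sigma_k)-\tfrac{A}{2}\rho^2$ and the evolution equation \eqref{e3}; then the argument goes through exactly as in the paper, for all $1\le k\le n-1$ under assumption \textbf{(A)} only.
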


\begin{proof}
Considering the auxiliary function
\begin{equation*}\label{}
Q=\log P-A\frac{\rho^{2}}{2},
\end{equation*}
where $P=\Theta\sigma_{k}$, and $A$ is a positive constant to be determined later. Assume that the spatial minimum of $Q$ is achieved at $\tilde{x}_{0}$. Then at $\tilde{x}_{0}$, we have
\begin{equation}
\begin{split}
\label{Qi}
0=\nabla_{i}Q=\frac{\nabla_{i}P}{P}-A\nabla_{i}\left(\frac{\rho^{2}}{2}\right),
\end{split}
\end{equation}
and
\begin{equation*}
\begin{split}
\label{}
0\leq \nabla_{ij}Q=\frac{\nabla_{ij}P}{P}-\frac{\nabla_{i}P\nabla_{j}P}{P^{2}}-A\nabla_{ij}\left(\frac{\rho^{2}}{2}\right).
\end{split}
\end{equation*}
Now, we compute
\begin{equation}
\begin{split}
\label{Qt}
&\partial_{t}Q-\Theta\sigma^{ij}_{k}\nabla_{ij}Q\\
&=\frac{1}{P}(\partial_{t}P-\Theta\sigma^{ij}_{k}\nabla_{ij}P)-A\left[\partial_{t}\left(\frac{\rho^{2}}{2}\right)-\Theta\sigma^{ij}_{k}\nabla_{ij}\left(\frac{\rho^{2}}{2}\right)\right]+\frac{\Theta}{P^{2}}\sigma^{ij}_{k}\nabla_{i}P\nabla_{j}P.
\end{split}
\end{equation}
Clearly
\begin{equation}
\begin{split}
\label{Pt}
\partial_{t}P=\Theta \partial_{t}\sigma_{k}+\sigma_{k}\partial_{t}\Theta,
\end{split}
\end{equation}
in which
\begin{equation*}
\begin{split}
\label{}
\partial_{t}\sigma_{k}&=\sigma^{ij}_{k}\partial_{t}(\nabla_{ij}h+\delta_{ij}h)\\
&=\sigma^{ij}_{k}\nabla_{ij}P-\sigma^{ij}_{k}\nabla_{ij}h+\sigma^{ij}_{k}\delta_{ij}P-\sigma^{ij}_{k}\delta_{ij}h\\
&=\sigma^{ij}_{k}\nabla_{ij}P-\sigma^{ij}_{k}(w_{ij}-h\delta_{ij})+\sigma^{ij}_{k}\delta_{ij}P-\sigma^{ij}_{k}\delta_{ij}h\\
&=\sigma^{ij}_{k}\nabla_{ij}P+\sigma^{ij}_{k}\delta_{ij}P-k\sigma_{k},
\end{split}
\end{equation*}
and
\begin{equation*}
\begin{split}
\label{}
\partial_{t}\Theta&=\frac{1}{f}G\varphi\frac{\partial h}{\partial t}+\frac{1}{f}h\varphi\left(\nabla G\cdot \frac{\partial X}{\partial t}\right)+\frac{1}{f}hG\varphi{'}\frac{\partial h}{\partial t}\\
&=\frac{1}{f}G\varphi(P-h)+\frac{1}{f}h\varphi(\nabla G\cdot (Pv-X))+\frac{1}{f}hG\varphi{'}(P-h).
\end{split}
\end{equation*}
In turn, \eqref{Pt} becomes
\begin{equation*}
\begin{split}
\label{}
\partial_{t}P&=\Theta(\sigma^{ij}_{k}\nabla_{ij}P+\sigma^{ij}_{k}\delta_{ij}P-k\sigma_{k})+\sigma_{k}\left[\frac{1}{f}G\varphi(P-h)+\frac{1}{f}h\varphi(\nabla G\cdot (Pv-X))+\frac{1}{f}hG\varphi{'}(P-h)\right]\\
&=\Theta\sigma^{ij}_{k}\nabla_{ij}P+\Theta\sigma^{ij}_{k}\delta_{ij}P-kP+\frac{P}{h}(P-h)+P^{2}\frac{\nabla G\cdot v}{G}-P\frac{\nabla G\cdot X}{G}+P(P-h)\frac{\varphi^{'}}{\varphi}\\
&=\Theta\sigma^{ij}_{k}\nabla_{ij}P+\Theta\sigma^{ij}_{k}\delta_{ij}P-P\left(k+1+\frac{\varphi^{'}h}{\varphi}+\frac{\nabla G\cdot X}{G}\right)+\frac{P^{2}}{h}\left(1+\frac{\varphi^{'}h}{\varphi}+\frac{(\nabla G\cdot hv)}{G}\right).
\end{split}
\end{equation*}
So, \eqref{Qt} turns into
\begin{equation}
\begin{split}
\label{klow}
&\partial_{t}Q-\Theta\sigma^{ij}_{k}\nabla_{ij}Q\\
&=\frac{\Theta}{P^{2}}\sigma^{ij}_{k}\nabla_{i}P\nabla_{j}P+\Theta\sigma^{ij}_{k}\delta_{ij}-\left(k+1+\frac{\varphi^{'}h}{\varphi}+\frac{\nabla G\cdot X}{G}\right)\\
&\quad +\frac{P}{h}\left(1+\frac{\varphi^{'}h}{\varphi}+\frac{h(\nabla G\cdot v)}{G}\right)-A(k+1)h\Theta\sigma_{k}+A\rho^{2}-A\sigma_{k}\nabla_{i}h\nabla_{i}\Theta+A\Theta\sigma^{ij}_{k}w_{mi}w_{mj}\\
&\geq A\rho^{2}-\left(k+1+\frac{\varphi^{'}h}{\varphi}+\frac{(\nabla G\cdot X)}{G}\right)+\frac{P}{h}\left(1+\frac{\varphi^{'}h}{\varphi}+\frac{(\nabla G\cdot hv)}{G}\right)-A(k+1)h\Theta\sigma_{k}-A\sigma_{k}\nabla_{i}h\nabla_{i}\Theta.
\end{split}
\end{equation}
By using the $C^{0}$, $C^{1}$ estimates, choosing $A$ large enough, the right hand side of \eqref{klow} is strictly positive provided $\min_{\sn}\sigma_{k}\rightarrow 0$ ($Q$ is negatively large enough), thus the lower bound of $Q$ follows, hence $\sigma_{k}$ is uniformly bounded below away from zero.

The upper bound of $\sigma_{k}$ is as follows.
\begin{lem}\label{prin2}
Supposing $1\leq k \leq n-1$, $k$ is an integer. Let $f$ be a smooth and positive function on $\sn$, $M_{t}$ be a smooth and strictly convex solution satisfying the flow \eqref{xOrflow}, and let $\varphi, G$ be smooth functions satisfying ${\bf A}$. Then
\[
\sigma_{k}(x,t)\leq \bar{C},\ \forall (x,t)\in {\sn}\times (0,\infty),
\]
for a positive constant $\bar{C}$, independent of $t$.
\end{lem}
\begin{proof}
Using the $C^{0}$ estimate, there exists a positive constant $B$ such that
\[
B<\rho^{2}<\frac{1}{B}
\]
for all $t>0$. Now, setting the auxiliary function as
\[
\chi(x,t)=\frac{\frac{1}{f}G\varphi\sigma_{k}}{1-\frac{B\rho^{2}}{2}}=\frac{P}{h}\frac{1}{1-\frac{B\rho^{2}}{2}}.
\]
Suppose the spatial maximum of $\chi(x,t)$ at $\tilde{x}_{1}$. Then at $\tilde{x}_{1}$, we get
\begin{equation*}
\begin{split}
\label{}
0=\nabla_{i}\chi=\nabla_{i}\left(\frac{P}{h}\right)\frac{1}{1-\frac{B\rho^{2}}{2}}+\frac{P}{h}\frac{B\nabla_{i}(\frac{\rho^{2}}{2})}{(1-\frac{B\rho^{2}}{2})^{2}},
\end{split}
\end{equation*}
and
\begin{equation*}
\begin{split}
\label{}
0\geq \nabla_{ij}\chi&=\nabla_{ij}\left(\frac{P}{h}\frac{1}{1-\frac{B\rho^{2}}{2}}\right)\\
&=\frac{1}{1-\frac{B\rho^{2}}{2}}\nabla_{ij}\left(\frac{P}{h}\right)+2\frac{B\nabla_{i}\frac{\rho^{2}}{2}}{(1-\frac{B\rho^{2}}{2})^{2}}\nabla_{j}\frac{P}{h}+2\frac{\frac{P}{h}B^{2}\nabla_{j}\frac{\rho^{2}}{2}\nabla_{i}\frac{\rho^{2}}{2}}{(1-\frac{B\rho^{2}}{2})^{3}}\\
&\quad +\frac{P}{h}\frac{B}{(1-\frac{B\rho^{2}}{2})^{2}}\nabla_{ij}\left(\frac{\rho^{2}}{2}\right)\\
&=\frac{1}{1-\frac{B\rho^{2}}{2}}\nabla_{ij}\left(\frac{P}{h}\right)+\frac{P}{h}\frac{B}{(1-\frac{B\rho^{2}}{2})^{2}}\nabla_{ij}\left(\frac{\rho^{2}}{2}\right).
\end{split}
\end{equation*}
We are in a position to compute the following
\begin{equation}
\begin{split}
\label{rit}
&\partial_{t}\chi-\Theta\sigma^{ij}_{k}\nabla_{ij}\chi\\
&=\frac{1}{1-\frac{B\rho^{2}}{2}}\partial_{t}\left(\frac{P}{h}\right)+\frac{P}{h}\frac{B}{\left(1-\frac{B\rho^{2}}{2}\right)^{2}}\partial_{t}\left(\frac{\rho^{2}}{2}\right)-\Theta\sigma^{ij}_{k}\frac{1}{1-\frac{B\rho^{2}}{2}}\nabla_{ij}\left(\frac{P}{h}\right)-\Theta\sigma^{ij}_{k}\frac{P}{h}\frac{B}{(1-\frac{B\rho^{2}}{2})^{2}}\nabla_{ij}\left(\frac{\rho^{2}}{2}\right)\\
&=\frac{1}{1-\frac{B\rho^{2}}{2}}\left[ \partial_{t}\left(\frac{P}{h}\right)-\Theta\sigma^{ij}_{k}\nabla_{ij}\left(\frac{P}{h}\right) \right]+\frac{P}{h}\frac{B}{\left(1-\frac{B\rho^{2}}{2}\right)^{2}}\left[\partial_{t}\left(\frac{\rho^{2}}{2}\right)-\Theta\sigma^{ij}_{k}\nabla_{ij}\left(\frac{\rho^{2}}{2}\right) \right],
\end{split}
\end{equation}
in which
\begin{equation}
\begin{split}
\label{inti}
&\partial_{t}\left(\frac{P}{h}\right)-N\sigma^{ij}_{k}\nabla_{ij}\left(\frac{P}{h}\right)\\
&=\frac{1}{h}(\partial_{t}P-\Theta\sigma^{ij}_{k}\nabla_{ij}P)-\frac{P}{h^{2}}(\partial_{t}h-\Theta\sigma^{ij}_{k}\nabla_{ij}h)+2\frac{P}{h}\sigma^{ij}_{k}\nabla_{i}\frac{P}{h}\nabla_{j}h\\
&=\frac{1}{h}\left[\Theta P\sigma^{ij}_{k}\delta_{ij}-P\left(k+1+\frac{\varphi^{'}}{\varphi}h+\frac{\nabla G\cdot X}{G}\right)\right]+\frac{P^{2}}{h^{2}}\left(1+\frac{\varphi^{'}h}{\varphi}+\frac{(\nabla G\cdot hv)}{G}\right)\\
&\quad -\frac{P}{h^{2}}(P-h-k\Theta\sigma_{k}+\Theta \sigma^{ij}_{k}h\delta_{ij})+2\frac{P}{h}\sigma^{ij}_{k}\nabla_{i}h\nabla_{j}\frac{P}{h}\\
&=-\frac{P}{h}\left(k+\frac{\varphi^{'}h}{\varphi}+\frac{\nabla G\cdot X}{G}\right)+\frac{P^{2}}{h^{2}}\left(k+\frac{\varphi^{'}h}{\varphi}+\frac{(\nabla G\cdot hv)}{G}\right)+2\frac{P}{h}\sigma^{ij}_{k}\nabla_{i}h\nabla_{j}\frac{P}{h}.
\end{split}
\end{equation}
In turn, \eqref{rit} becomes
\begin{equation}
\begin{split}
\label{rit2}
&\partial_{t}\chi-\Theta\sigma^{ij}_{k}\nabla_{ij}\chi\\
&=\frac{1}{1-\frac{B\rho^{2}}{2}}\left[  -\frac{P}{h}\left(k+\frac{\varphi^{'}h}{\varphi}+\frac{\nabla G\cdot X}{G}\right)+\frac{P^{2}}{h^{2}}\left(k+\frac{\varphi^{'}h}{\varphi}+\frac{h(\nabla G\cdot v)}{G}\right)+2\frac{P}{h}\sigma^{ij}_{k}\nabla_{i}h\nabla_{j}\frac{P}{h} \right]\\
&\quad +\frac{P}{h}\frac{B}{(1-\frac{B\rho^{2}}{2})^{2}}\left[(k+1)h\Theta\sigma_{k}-\rho^{2}+\sigma_{k}\nabla_{i}h\nabla_{i}\Theta-\Theta\sigma^{ij}_{k}w_{mi}w_{mj} \right].
\end{split}
\end{equation}
On one hand, at $\tilde{x}_{1}$, there satisfies
\begin{equation}\label{}
\nabla_{j}\frac{P}{h}=-\frac{P}{h}\frac{B\nabla_{j}\frac{\rho^{2}}{2}}{1-\frac{B\rho^{2}}{2}}=-\frac{P}{h}\frac{Bw_{jm}h_{m}}{1-\frac{B\rho^{2}}{2}}.
\end{equation}
On the other hand, by the inverse concavity of $(\sigma_{k})^{\frac{1}{k}}$ (see \cite{AM13,U91}) , we have
\[
\left( (\sigma_{k})^\frac{1}{k} \right)^{ij}w_{im}w_{jm}\geq (\sigma_{k})^{\frac{2}{k}},
\]
which illustrates that
\begin{equation*}\label{}
\sigma^{ij}_{k}w_{im}w_{jm}\geq k(\sigma_{k})^{1+\frac{1}{k}}.
\end{equation*}
So,
\begin{equation*}
\begin{split}
\label{}
&\partial_{t}\chi-\Theta \sigma^{ij}_{k}\nabla_{ij}\chi\\
&\leq \frac{1}{1-\frac{B\rho^{2}}{2}}\left[  -\frac{P}{h}\left(k+\frac{\varphi^{'}h}{\varphi}+\frac{\nabla G\cdot X}{G}\right)+\frac{P^{2}}{h^{2}}\left(k+\frac{\varphi^{'}h}{\varphi}+\frac{h(\nabla G\cdot v)}{G}\right) \right]\\
&\quad +\frac{P}{h}\frac{B}{(1-\frac{B\rho^{2}}{2})^{2}}\left[(k+1)h\Theta\sigma_{k}-\rho^{2}+\sigma_{k}\nabla_{i}h\nabla_{i}\Theta-\Theta (\sigma_{k})^{1+\frac{1}{k}} \right].
\end{split}
\end{equation*}
This in conjunction with the $C^{0}, C^{1}$ estimates, we have
\begin{equation}\label{}
\partial_{t}\chi\leq c_{1}\chi + c_{2}\chi^{2}- c_{3}\chi^{2+\frac{1}{k}}
\end{equation}
for some positive constants $c_{1},c_{2},c_{3}$. Then, we see that $\chi(x,t)$ is uniformly  bounded from above. Hence, the upper bound of $\sigma_{k}$ is obtained.
\end{proof}
We are in a position to derive the upper bound of the principal curvatures for $1\leq k < n-1$.
\begin{lem}\label{gusk}
Supposing $1\leq k < n-1$, $k$ is an integer.  Let $M_{t}$ be a smooth and strictly convex solution satisfying the flow \eqref{xOrflow}. Suppose $f, \varphi, G$ be smooth and positive functions satisfying the assumptions of Theorem \ref{main*} and ${\bf A}$.  Then
\begin{equation}
\kappa_{i}(x,t)\leq \hat{C}, \quad \forall(x,t)\in \sn\times(0,\infty),
\end{equation}
where $\hat{C}$ is a positive constant independent of $t$.
\end{lem}
\begin{proof}
Suppose the spatial maximum of the maximum eigenvalue of the matrix $\{\frac{w^{ij}}{h}\}$ is attained at $\tilde{x}_{2}\in \sn$. By a rotation, we may assume $\{w_{ij}(\tilde{x}_{2},t)\}$ is diagonal, and the maximum eigenvalue of the matrix $\{\frac{w^{ij}}{h}\}$ at $(\tilde{x}_{2},t) $ is $\frac{w^{11}}{h}(\tilde{x}_{2},t)$. So, at $\tilde{x}_{2}$, we have
\begin{equation*}\label{}
\nabla_{i}\left(\frac{w^{11}}{h}\right)=0,
\end{equation*}
i.e.,
\begin{equation}\label{hi}
w^{11}w_{11i}=-\frac{h_{i}}{h}.
\end{equation}
Moreover,
\begin{equation*}\label{}
\nabla_{ij}\left(\frac{w^{11}}{h}\right)\leq0.
\end{equation*}
Now, we compute the evolution equation of $\frac{w^{11}}{h}$ as
\begin{equation*}
\begin{split}
\label{}
&\partial_{t}\frac{w^{11}}{h}-\Theta\sigma^{ij}_{k}\nabla_{ij}\frac{w^{11}}{h}\\
&=\frac{2}{h}\Theta\sigma^{ij}_{k}\nabla_{i}\frac{w^{11}}{h}\nabla_{j}h+\frac{\Theta}{h^{2}}w^{11}\sigma^{ij}_{k}\nabla_{ij}h-(k+1)\frac{\Theta}{h}\sigma_{k}(w^{11})^{2}+\frac{\Theta}{h}\sigma^{ij}_{k}\delta_{ij}w^{11}\\
&\quad -\frac{\Theta}{h}(w^{11})^{2}(\sigma^{ij,mn}_{k}+2\sigma^{im}_{k}w^{nj})\nabla_{1}w_{ij}\nabla_{1}w_{mn}\\
&\quad -\frac{1}{h}(w^{11})^{2}(\nabla_{11}\Theta \sigma_{k}+2\nabla_{1}\sigma_{k}\nabla_{1}\Theta)-\frac{w^{11}}{h^{2}}\Theta\sigma_{k}+2\frac{w^{11}}{h}\\
&=\frac{2}{h}\Theta\sigma^{ij}_{k}\nabla_{i}\frac{w^{11}}{h}\nabla_{j}h-(k+1)\frac{\Theta}{h}\sigma_{k}(w^{11})^{2}-\frac{\Theta}{h}(w^{11})^{2}(\sigma^{ij,mn}_{k}+2\sigma^{im}_{k}w^{nj})\nabla_{1}w_{ij}\nabla_{1}w_{mn}\\
&\quad -\frac{1}{h}(w^{11})^{2}(\nabla_{11}\Theta \sigma_{k}+2\nabla_{1}\sigma_{k}\nabla_{1}\Theta)+(k-1)\frac{w^{11}}{h^{2}}\Theta\sigma_{k}+\frac{2w^{11}}{h}.
\end{split}
\end{equation*}
By means of the inverse concavity of $(\sigma_{k})^{\frac{1}{k}}$, there is (see \cite{AM13,U91}),
\begin{equation}\label{c1}
(\sigma_{k}^{ij,mn}+2\sigma^{im}_{k}w^{nj})\nabla_{1}w_{ij}\nabla_{1}w_{mn}\geq \frac{k+1}{k}\frac{(\nabla_{1}\sigma_{k})^{2}}{\sigma_{k}}.
\end{equation}
Utilizing Schwartz inequality, we obtain
\begin{equation}\label{c2}
2|\nabla_{1}\sigma_{k}\nabla_{1}\Theta|\leq \frac{k+1}{k}\frac{\Theta(\nabla_{1}\sigma_{k})^{2}}{\sigma_{k}}+\frac{k}{k+1}\frac{\sigma_{k}(\nabla_{1}\Theta)^{2}}{\Theta}.
\end{equation}
Using \eqref{c1} and \eqref{c2}, at $\tilde{x}_{2}$, there is
\begin{equation}\label{c3}
\partial_{t}\frac{w^{11}}{h}\leq -\frac{(w^{11})^{2}}{h}\sigma_{k}\left[ \nabla_{11}\Theta-\frac{k}{k+1} \frac{(\nabla_{1}\Theta)^{2}}{\Theta}+(k+1)\Theta+(1-k)\frac{\Theta w_{11}}{h}\right]+\frac{2w^{11}}{h}.
\end{equation}
To estimate \eqref{c3}. Let $\iota$ be the arc-length of the great circle passing through $x_{0}$ with the unit tangent vector $e_{1}$, then
\begin{equation}\label{n11}
\nabla_{11}\Theta-\frac{k}{k+1}\frac{(\nabla_{1}\Theta)^{2}}{\Theta}+(k+1)\Theta=(k+1)\Theta^{\frac{k}{k+1}}\left(\Theta^{\frac{1}{k+1}}+(\Theta^{\frac{1}{k+1}})_{\iota\iota}\right)=(k+1)\Theta(1+\Theta^{-\frac{1}{k+1}}\Theta^{\frac{1}{k+1}}_{\iota \iota}).
\end{equation}
On one hand,
\begin{equation}\label{}
\Theta_{\iota}=(f^{-1})_{\iota}\varphi hG+f^{-1}G\varphi h_{\iota}\left(1+\frac{\varphi^{'}}{\varphi}h\right)+f^{-1}\varphi h G_{\iota}.
\end{equation}
On the other hand,
\begin{equation}
\begin{split}
\label{}
\Theta_{\iota \iota}&=(f^{-1})_{\iota\iota}\varphi h G+2(f^{-1})_{\iota}\varphi Gh_{\iota}\left(1+\frac{\varphi^{'}}{\varphi}h\right)+2(f^{-1})_{\iota}\varphi hG_{\iota}+f^{-1}\varphi^{'}h^{2}_{\iota}G\left(1+\frac{\varphi^{'}}{\varphi}h\right)\\
&\quad +f^{-1}\varphi G_{\iota}h_{\iota}\left(1+\frac{\varphi^{'}}{\varphi}h\right)+f^{-1}\varphi Gh_{\iota \iota}\left(1+\frac{\varphi^{'}}{\varphi}h\right)+f^{-1}\varphi Gh^{2}_{\iota}\left(1+\frac{\varphi^{'}}{\varphi}h\right)^{'}\\
&\quad+f^{-1}\varphi^{'}hh_{\iota}G_{\iota}+f^{-1}\varphi h_{\iota}G_{\iota}+f^{-1}\varphi hG_{\iota \iota}.
\end{split}
\end{equation}
In view of \eqref{n11},
\begin{equation}
\begin{split}
\label{Gd}
&1+\Theta^{-\frac{1}{k+1}}(\Theta^{\frac{1}{k+1}})_{\iota \iota}\\
&=1+\frac{1}{k+1}\Theta^{-1}\Theta_{\iota\iota}-\frac{k}{(k+1)^{2}}\Theta^{-2}\Theta^{2}_{\iota}\\
&=1+\frac{1}{k+1}f(f^{-1})_{\iota\iota}+\frac{2f}{(k+1)h}(f^{-1})_{\iota}h_{\iota}\left(1+\frac{\varphi^{'}}{\varphi}h\right)+\frac{\varphi^{'}}{(k+1)\varphi h}h^{2}_{\iota}\left(1+\frac{\varphi^{'}h}{\varphi }\right)\\
&\quad +\frac{h_{\iota\iota}}{(k+1)h}\left(1+\frac{\varphi^{'}}{\varphi}h\right)+\frac{h^{2}_{\iota}}{h(k+1)}\left(1+\frac{\varphi^{'}}{\varphi}h\right)^{'}+\frac{2}{k+1}(f^{-1})_{\iota}f\frac{G_{\iota}}{G}+\frac{G_{\iota}h_{\iota}}{(k+1)Gh}\left(1+\frac{\varphi^{'}}{\varphi}h\right)\\
&\quad +\frac{\varphi^{'}}{(k+1)\varphi G}h_{\iota}G_{\iota}+\frac{h_{\iota}}{(k+1)h G}G_{\iota}+\frac{G_{\iota\iota}}{(k+1)G}-\frac{k}{(k+1)^{2}}f^{2}(f^{-1})^{2}_{\iota}-\frac{k}{(k+1)^{2}}\frac{h^{2}_{\iota}}{h^{2}}\left(1+\frac{\varphi^{'}}{\varphi}h\right)^{2}\\
&\quad -\frac{2kf(f^{-1})_{\iota}h_{\iota}}{h(k+1)^{2}}\left(1+\frac{\varphi^{'}}{\varphi}h\right)-\frac{k}{(k+1)^{2}}\frac{G^{2}_{\iota}}{G^{2}}-2\frac{k}{k+1}\frac{f}{G}(f^{-1})_{\iota}G_{\iota}-\frac{2k}{(k+1)^{2}}\frac{h_{\iota}G_{\iota}}{hG}\left(1+\frac{\varphi^{'}}{\varphi}h\right)\\
&=1+\frac{1}{k+1}f(f^{-1})_{\iota\iota}+\frac{2f}{(k+1)^{2}h}(f^{-1})_{\iota}h_{\iota}\left(1+\frac{\varphi^{'}h}{\varphi}\right)+\frac{h_{\iota\iota}}{(k+1)h}\left(1+\frac{\varphi^{'}h}{\varphi}\right)\\
&\quad +\frac{h^{2}_{\iota}}{(k+1)h}\left(1+\frac{\varphi^{'}h}{\varphi}\right)^{'}-\frac{k}{(k+1)^{2}}f^{2}(f^{-1})^{2}_{\iota}+\frac{h^{2}_{\iota}}{(k+1)^{2}h^{2}}\left(1+\frac{\varphi^{'}h}{\varphi}\right)\left(\frac{\varphi^{'}h}{\varphi}-k\right)\\
&\quad +\frac{2}{k+1}(f^{-1})_{\iota}f\frac{G_{\iota}}{G}+\frac{G_{\iota}h_{\iota}}{(k+1)Gh}\left(1+\frac{\varphi^{'}}{\varphi}h\right)+\frac{\varphi^{'}}{(k+1)\varphi G}h_{\iota}G_{\iota}+\frac{h_{\iota}}{(k+1)h G}G_{\iota}+\frac{G_{\iota\iota}}{(k+1)G}\\
&\quad-\frac{k}{(k+1)^{2}}\frac{G^{2}_{\iota}}{G^{2}}-2\frac{k}{k+1}\frac{f}{G}(f^{-1})_{\iota}G_{\iota}-\frac{2k}{(k+1)^{2}}\frac{h_{\iota}G_{\iota}}{hG}\left(1+\frac{\varphi^{'}}{\varphi}h\right)\\
&=\frac{1+\frac{\varphi^{'}h}{\varphi}}{(k+1)}\frac{h_{\iota\iota}+h}{h}+\frac{h^{2}_{\iota}}{(k+1)h}\left(1+\frac{\varphi^{'}h}{\varphi}\right)^{'}\\
&\quad-\frac{1+\frac{\varphi^{'}h}{\varphi}}{h(k+1)^{2}}f \left[ h_{\iota}\left(\frac{k-\frac{\varphi^{'}h}{\varphi}}{fh}  \right)^{\frac{1}{2}} -(f^{-1})_{\iota}\left(\frac{hf}{k-\frac{\varphi^{'}h}{\varphi}} \right)^{\frac{1}{2} }\right]^{2}\\
&\quad +\frac{1}{k+1}\left[\left(k- \frac{\varphi^{'}h}{\varphi} \right)-(f^{-1})^{2}_{\iota}f^{2}\left( \frac{k}{k+1}+\frac{1}{k+1}\frac{1+\frac{\varphi^{'}h}{\varphi}}{\frac{\varphi^{'}h}{\varphi}-k}\right)+(f^{-1})_{\iota\iota}f\right]\\
&\quad +\frac{2}{k+1}(f^{-1})_{\iota}f\frac{G_{\iota}}{G}+\frac{G_{\iota}h_{\iota}}{(k+1)Gh}\left(1+\frac{\varphi^{'}}{\varphi}h\right)+\frac{\varphi^{'}}{(k+1)\varphi G}h_{\iota}G_{\iota}+\frac{h_{\iota}}{(k+1)h G}G_{\iota}+\frac{G_{\iota\iota}}{(k+1)G}\\
&\quad-\frac{k}{(k+1)^{2}}\frac{G^{2}_{\iota}}{G^{2}}-2\frac{k}{k+1}\frac{f}{G}(f^{-1})_{\iota}G_{\iota}-\frac{2k}{(k+1)^{2}}\frac{h_{\iota}G_{\iota}}{hG}\left(1+\frac{\varphi^{'}}{\varphi}h\right).
\end{split}
\end{equation}
Making use of the assumptions of Theorem \ref{main*}, one see that $(1+\frac{\varphi^{'}}{\varphi}h)^{'}\geq 0$, and $\frac{\varphi^{'}}{\varphi}h\leq-1$. \eqref{Gd} becomes
\begin{equation}
\begin{split}
\label{fi}
&1+\Theta^{-\frac{1}{k+1}}(\Theta^{\frac{1}{k+1}})_{\iota \iota}\\
&\geq \frac{1+\frac{\varphi^{'}h}{h}}{k+1}\frac{h_{\iota\iota}+h}{h}
 +\frac{1}{k+1}\left[\left(k- \frac{\varphi^{'}h}{\varphi} \right)-(f^{-1})^{2}_{\iota}f^{2}\left( \frac{k}{k+1}+\frac{1}{k+1}\frac{1+\frac{\varphi^{'}h}{\varphi}}{\frac{\varphi^{'}h}{\varphi}-k}\right)+(f^{-1})_{\iota\iota}f\right]\\
&\quad +\frac{2}{k+1}(f^{-1})_{\iota}f\frac{G_{\iota}}{G}+\frac{G_{\iota}h_{\iota}}{(k+1)Gh}\left(1+\frac{\varphi^{'}}{\varphi}h\right)+\frac{\varphi^{'}}{(k+1)\varphi G}h_{\iota}G_{\iota}+\frac{h_{\iota}}{(k+1)h G}G_{\iota}+\frac{G_{\iota\iota}}{(k+1)G}\\
&\quad-\frac{k}{(k+1)^{2}}\frac{G^{2}_{\iota}}{G^{2}}-2\frac{k}{k+1}\frac{f}{G}(f^{-1})_{\iota}G_{\iota}-\frac{2k}{(k+1)^{2}}\frac{h_{\iota}G_{\iota}}{hG}\left(1+\frac{\varphi^{'}}{\varphi}h\right)\\
&\geq \frac{1+\frac{\varphi^{'}h}{h}}{k+1}\frac{h_{\iota\iota}+h}{h}+\frac{1}{k+1}\left[\left(k- \frac{\varphi^{'}h}{\varphi} \right)-(f^{-1})^{2}_{\iota}f^{2}\left( \frac{k-1-\frac{\varphi^{'}h}{\varphi}}{k-\frac{\varphi^{'}h}{\varphi}}\right)+(f^{-1})_{\iota\iota}f\right]\\
&\quad +\frac{2}{k+1}(f^{-1})_{\iota}f\frac{G_{\iota}}{G}+\frac{G_{\iota}h_{\iota}}{(k+1)Gh}\left(1+\frac{\varphi^{'}}{\varphi}h\right)+\frac{\varphi^{'}}{(k+1)\varphi G}h_{\iota}G_{\iota}+\frac{h_{\iota}}{(k+1)h G}G_{\iota}+\frac{G_{\iota\iota}}{(k+1)G}\\
&\quad-\frac{k}{(k+1)^{2}}\frac{G^{2}_{\iota}}{G^{2}}-2\frac{k}{k+1}\frac{f}{G}(f^{-1})_{\iota}G_{\iota}-\frac{2k}{(k+1)^{2}}\frac{h_{\iota}G_{\iota}}{hG}\left(1+\frac{\varphi^{'}}{\varphi}h\right).
\end{split}
\end{equation}
 Using again the assumptions of Theorem \ref{main*}, explicitly,
\begin{equation}
\begin{split}
\label{c0}
&\left(k- \frac{\varphi^{'}h}{\varphi} \right)-(f^{-1})^{2}_{\iota}f^{2}\left( \frac{k-1-\frac{\varphi^{'}h}{\varphi}}{k-\frac{\varphi^{'}h}{\varphi}}\right)+(f^{-1})_{\iota\iota}f\\
&\geq (k+1)-(f^{-1})^{2}_{\iota}f^{2}\frac{k+\vartheta-1}{k+\vartheta}+(f^{-1})_{\iota\iota}f\\
&=(k+1)+(k+\vartheta)f^{\frac{1}{k+\vartheta}}(f^{-\frac{1}{k+\vartheta}})_{\iota\iota}\\
&=f^{\frac{1}{k+\vartheta}}\left[(k+1)f^{-\frac{1}{k+\vartheta}}+(k+\vartheta)(f^{-\frac{1}{k+\vartheta}})_{\iota\iota}\right]\\
&\geq c_{0},
\end{split}
\end{equation}
where $c_{0}$ is a positive constant depending on $f$ and the minimum eigenvalue of $(k+1)f^{-\frac{1}{k+\vartheta}}+(k+\vartheta)(f^\frac{1}{k+\vartheta})_{\iota\iota}$.
On the other hand,
\begin{equation}\label{gi}
G_{\iota}=(\nabla G\cdot e_{\iota})w_{\iota\iota},
\end{equation}
and
\begin{equation}\label{gi2}
G_{\iota\iota}=((\nabla^{2}G\cdot e_{\iota})\cdot e_{\iota})w^{2}_{\iota\iota}-(\nabla G\cdot x)w_{\iota\iota}+(\nabla G\cdot e_{i})w_{i\iota\iota}.
\end{equation}
Using $C^{0}$, $C^{1}$ estimates, then \eqref{gi} and \eqref{gi2} imply that
\begin{equation}
\begin{split}
\label{wii}
&\frac{2}{k+1}(f^{-1})_{\iota}f\frac{G_{\iota}}{G}+\frac{G_{\iota}h_{\iota}}{(k+1)Gh}\left(1+\frac{\varphi^{'}}{\varphi}h\right)+\frac{\varphi^{'}}{(k+1)\varphi G}h_{\iota}G_{\iota}+\frac{h_{\iota}}{(k+1)h G}G_{\iota}+\frac{G_{\iota\iota}}{(k+1)G}\\
&-\frac{k}{(k+1)^{2}}\frac{G^{2}_{\iota}}{G^{2}}-2\frac{k}{k+1}\frac{f}{G}(f^{-1})_{\iota}G_{\iota}-\frac{2k}{(k+1)^{2}}\frac{h_{\iota}G_{\iota}}{hG}\left(1+\frac{\varphi^{'}}{\varphi}h\right)\geq -c_{1}w_{\iota\iota}-c_{2}w^{2}_{\iota\iota}+\frac{1}{k+1}\frac{\nabla G\cdot e_{i}}{G}w_{i\iota\iota}.
\end{split}
\end{equation}
Substituting \eqref{wii} and \eqref{c0} into \eqref{fi}, we get
\begin{equation}
\begin{split}
\label{wt}
\partial_{t}\frac{w^{11}}{h}\leq -\frac{(w^{11})^{2}}{h}\sigma_{k}\Theta\left(c_{0} -c_{1} w_{11}-c_{2}w^{2}_{11}+\frac{2-\vartheta-k}{h} w_{11}+\frac{\nabla G\cdot e_{i}}{G}w_{11i}\right)+\frac{2w^{11}}{h}.
\end{split}
\end{equation}
Using \eqref{hi} into \eqref{wt}, we have
\begin{equation}\label{}
\partial_{t}\frac{w^{11}}{h}\leq -C_{1}\left(\frac{w^{11}}{h}\right)^{2}+C_{2}\frac{w^{11}}{h},
\end{equation}
where $C_{1}$ and $C_{2}$ are positive constants independent of $t$. This give the upper bound of $w^{11}$. The proof is completed.
\end{proof}

\end{proof}
For the particular case $k=n-1$, the upper bound of the principal curvatures is as follows.
\begin{lem}\label{Gau}
Supposing $k= n-1$. Let $f$ be a smooth and positive function on $\sn$, $M_{t}$ be a smooth and strictly convex solution satisfying the flow \eqref{xOrflow}, and let $\varphi, G$ be smooth functions satisfying ${\bf A}$. Then
\[
\kappa_{i}(x,t)\leq C, \quad  \forall(x,t)\in \sn\times(0,\infty)
\]
where $C$ is a positive constant, independent of $t$.

\end{lem}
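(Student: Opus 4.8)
The plan is to argue by the maximum principle exactly as in Lemma~\ref{gusk}, but to use the Monge--Amp\`ere structure available when $k=n-1$ in place of the convexity hypothesis on $f$. So I consider the spatial maximum of the largest eigenvalue of $\{w^{ij}/h\}$, say attained at $\tilde x\in\sn$; after a rotation $\{w_{ij}(\tilde x,t)\}$ is diagonal, this eigenvalue equals $w^{11}/h$, and $w_{11}=\min_p w_{pp}$, i.e. $w^{11}=\max_p w^{pp}$, at $\tilde x$. There $\nabla_i(w^{11}/h)=0$, equivalently $w^{11}w_{11i}=-h_i/h$ as in \eqref{hi}, and $\nabla_{ij}(w^{11}/h)\le0$. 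Starting from the evolution equation \eqref{e2} for $w^{ij}$ and proceeding as in the derivation of \eqref{c3} --- the inverse concavity of $\sigma_{n-1}^{1/(n-1)}$ handling the third-order term via \eqref{c1}, Cauchy--Schwarz \eqref{c2} absorbing $\nabla_1\sigma_{n-1}\,\nabla_1\Theta$, and the gradient correction vanishing at $\tilde x$ --- but keeping the term $\Theta\sigma^{pq}_{n-1}\delta_{pq}w^{11}$ undisturbed, one is left at $\tilde x$ with an inequality for $\partial_t(w^{11}/h)$ whose dangerous terms are the quadratic curvature term, this term $\Theta\sigma^{pq}_{n-1}\delta_{pq}w^{11}$, and the terms containing $\nabla_1\Theta$ and $\nabla_{11}\Theta$.

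The simplification for $k=n-1$ is that $\sigma_{n-1}=\det(w_{ij})$, so $\sigma^{pq}_{n-1}=\sigma_{n-1}w^{pq}$ and hence $\sum_p\sigma^{pp}_{n-1}=\sigma_{n-1}\sum_p w^{pp}$. At $\tilde x$ each $w^{pp}\le w^{11}$, so $\sum_p w^{pp}\le(n-1)w^{11}$, whence $\Theta\sigma^{pq}_{n-1}\delta_{pq}w^{11}=\Theta\sigma_{n-1}\big(\sum_p w^{pp}\big)w^{11}\le(n-1)\Theta\sigma_{n-1}(w^{11})^2$; combined with the curvature term $-(k+1)\Theta\sigma_{n-1}(w^{11})^2=-n\Theta\sigma_{n-1}(w^{11})^2$ coming from \eqref{e2}, their sum is $\le-\Theta\sigma_{n-1}(w^{11})^2$. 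This strictly negative quadratic term uses only the $C^0$ bound on $h$ and the two-sided bound $C\le\sigma_{n-1}\le\bar C$ furnished by Lemmas~\ref{prin} and \ref{prin2}, and no information on $f$, $\varphi$ or $G$ beyond $\mathbf A$; it plays precisely the role that, for $k<n-1$, had to be supplied by the positivity of $(k+1)f^{-1/(k+\vartheta)}\delta_{ij}+(k+\vartheta)\nabla_{ij}(f^{-1/(k+\vartheta)})$.

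It then remains to show that the terms carrying $\nabla_1\Theta$ and $\nabla_{11}\Theta$, which are also multiplied by $(w^{11})^2$, are dominated by $-\Theta\sigma_{n-1}(w^{11})^2$. Here one uses that $\Theta=f^{-1}h\varphi(h)G(\nabla_\sn h+hx)$ depends only on $x$, $h$ and $\nabla_\sn h$: differentiating and invoking \eqref{hi} together with the Codazzi identity on $\sn$, which at $\tilde x$ gives $\nabla_1 w_{1j}=\nabla_j w_{11}=-h_j w_{11}/h$, every third-order derivative of $h$ entering $\nabla_{11}\Theta$ reduces to a first-order quantity, so that $\nabla_1\Theta$ and $\nabla_{11}\Theta$ are bounded at $\tilde x$ by the $C^0$, $C^1$ estimates of Section~\ref{Sec4}, with $\Theta\ge1/C$. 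Assembling everything one arrives at $\partial_t\frac{w^{11}}{h}\le-C_1\big(\frac{w^{11}}{h}\big)^2+C_2\frac{w^{11}}{h}$ at $\tilde x$ with $C_1,C_2>0$ independent of $t$; since the left-hand side is $\ge\partial_t\frac{w^{11}}{h}$ there, the usual maximum-principle comparison yields a uniform upper bound for $w^{11}/h$, hence $\kappa_i\le C$. I expect the main obstacle to be exactly the bookkeeping in this last step: carrying out the reorganisation of the full collection of $\nabla\Theta$ and $\nabla^2\Theta$ contributions (the analogue of the long computation \eqref{Gd}--\eqref{c0}) so that, after the reductions above, no positive term of order $(w^{11})^2$ survives --- it is here that the Monge--Amp\`ere identity $\sigma^{pq}_{n-1}=\sigma_{n-1}w^{pq}$, unavailable when $k<n-1$, must be used to the fullest, and this is what removes the need for any hypothesis on $f$.
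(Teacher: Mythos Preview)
Your plan contains a genuine gap, in two places.

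First, the term you wish to keep, $\Theta\sigma^{pq}_{n-1}\delta_{pq}w^{11}$, does not survive in the evolution of $w^{11}/h$. If you write out $\partial_t(w^{11}/h)-\Theta\sigma^{ij}_{n-1}\nabla_{ij}(w^{11}/h)$, the contribution $+\tfrac{1}{h}\Theta\sigma^{pq}_{n-1}\delta_{pq}w^{11}$ from \eqref{e2} is cancelled exactly by the $-\tfrac{w^{11}}{h^2}\cdot h\Theta\sigma^{ij}_{n-1}\delta_{ij}$ coming from $\partial_t h-\Theta\sigma^{ij}_{n-1}\nabla_{ij}h$ (use $\nabla_{ij}h=w_{ij}-h\delta_{ij}$). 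This is precisely why \eqref{c3} carries no $\sigma^{ij}_k\delta_{ij}$ term; the cancellation is forced, not optional. If instead you work with $w^{11}$ alone the term does survive, and the combination you describe leads to
\[
\partial_t w^{11}\le -(w^{11})^2\sigma_{n-1}\Big[\Theta+\nabla_{11}\Theta-\tfrac{n-1}{n}\tfrac{(\nabla_1\Theta)^2}{\Theta}\Big]+w^{11}.
\]

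This exposes the second and deeper problem: boundedness of $\nabla_1\Theta$ and $\nabla_{11}\Theta$ is not enough, because they carry the \emph{same} factor $(w^{11})^2\sigma_{n-1}$ as the favourable term. You would need $\Theta+\nabla_{11}\Theta-\tfrac{n-1}{n}(\nabla_1\Theta)^2/\Theta>0$, which is equivalent to $\Theta^{1/n}+(\Theta^{1/n})_{11}>0$. For $k<n-1$ this is exactly what the convexity hypothesis on $f$ provides (compare \eqref{n11}--\eqref{c0}); for $k=n-1$ no such hypothesis is in force and the inequality can fail for arbitrary smooth positive $f$. So the difficulty is structural, not bookkeeping.

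The paper resolves this by changing the auxiliary function to $E=\log w^{11}-d\log h+\tfrac{l}{2}\rho^{2}$ with free parameters $d,l$. The crucial point is that the $-d\log h$ part manufactures a term $-d\sum_p w^{pp}$ in the evolution inequality for $E$ (see \eqref{EFin}--\eqref{Finall}); since $d$ may be chosen as large as one wishes, this absorbs the bounded-coefficient term $\tilde C_3 w^{11}$ produced by $\nabla_{11}\log\Theta$ regardless of its sign. The $\tfrac{l}{2}\rho^{2}$ part likewise supplies $-l\sum_p w_{pp}$ to handle residual $w_{11}$ contributions and the sign of $\tfrac{1+d-l\rho^{2}}{h_t+h}$. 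The identity $\sigma^{pq}_{n-1}=\sigma_{n-1}w^{pq}$ is used, but only to write $\nabla_j\log\sigma_{n-1}=w^{ik}\nabla_j w_{ik}$ in \eqref{ED3}--\eqref{ED4}, not as a mechanism to create a dominant negative $(w^{11})^{2}$ term.
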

\begin{proof}
We consider the auxiliary function,
\begin{equation}\label{Emax}
\widetilde{E}(x,t)=\log \lambda_{max}(\{w^{ij}\})-d\log h+\frac{l}{2}\rho^{2},
\end{equation}
where $d$ and $l$ are positive constants to be specified later, and $\lambda_{max}(\{w^{ij}\})$ is the maximal eigenvalue of $\{w^{ij}\}$.

For any fixed $t\in (0,\infty)$, we assume that the maximum of $\widetilde{E}(x,t)$ is attained at $x_{0}$ on ${\sn}$. By a rotation of coordinates, we may assume that $\{w^{ij}(x_{0},t)\}$ is diagonal, and $\lambda_{max}(\{w^{ij}(x_{0},t)\})=w^{11}(x_{0},t)$. Then, \eqref{Emax} turns into
\begin{equation*}\label{E}
E(x,t)=\log w^{11}-d\log h+\frac{l}{2}\rho^{2}.
\end{equation*}
At $x_{0}$, we have
\begin{align}\label{E1}
0=\nabla_{i}E&=-w^{11}\nabla_{i}w_{11}-d\frac{h_{i}}{h}+l\rho\rho_{i}\notag\\
&=-w^{11}(h_{i11}+h_{1}\delta_{1i})-d\frac{h_{i}}{h}+l\rho\rho_{i},
\end{align}
and
\begin{align}\label{E2}
0\geq \nabla_{ii}E&=-w^{11}\nabla_{ii}w_{11}+2w^{11}\sum w^{kk}(\nabla_{1}w_{ik})^{2}-(w^{11})^{2}(\nabla_{i}w_{11})^{2}-d\left(\frac{h_{ii}}{h}-\frac{h^{2}_{i}}{h^{2}}\right)+l\rho^{2}_{i}+l\rho \rho_{ii}.
\end{align}
In addition,
\begin{equation}\label{ED1}
\partial_{t}E=-w^{11}\partial_{t}w_{11}-d\frac{h_{t}}{h}+l\rho\rho_{t}=-w^{11}(h_{11t}+h_{t})-d\frac{h_{t}}{h}+l\rho\rho_{t}.
\end{equation}
Now, recall that
\begin{equation}\label{ED11}
\log(h_{t}+ h)=\log \sigma_{n-1}+\chi(x,t),
\end{equation}
where
\begin{equation*}\label{ED2}
\chi(x,t)=\log\left(\frac{1}{f}hG(\nabla h)\varphi(h)\right).
\end{equation*}
Differentiating \eqref{ED11} gives
\begin{align}\label{ED3}
\frac{h_{jt}+ h_{j}}{h_{t}+ h}&= w^{ik}\nabla_{j}w_{ik}+\nabla_{j}\chi\notag\\
&= w^{ii}(h_{jii}+h_{i}\delta_{ij})+\nabla_{j}\chi,
\end{align}
and
\begin{align}\label{ED4}
&\frac{h_{11t}+ h_{11}}{h_{t}+h}-\frac{( h_{1}+h_{1t})^{2}}{(h_{t}+ h)^{2}}\notag\\
&=\sum w^{ii}\nabla_{11}w_{ii}-\sum w^{ii}w^{kk}(\nabla_{1}w_{ik})^{2}+\nabla_{11}\chi.
\end{align}
The Ricci identity on sphere shows
\[
\nabla_{11}w_{ij}=\nabla_{ij}w_{11}-\delta_{ij}w_{11}+\delta_{11}w_{ij}-\delta_{1i}w_{1j}+\delta_{1j}w_{1i}.
\]
Hence, apply the Ricci identity, \eqref{E2}, \eqref{ED1}, \eqref{ED3} and \eqref{ED4}, at $x_{0}$, we obtain
\begin{align}\label{EFin}
\frac{\partial_{t}E}{h_{t}+ h}&=\frac{-w^{11}(h_{11t}+h_{t})}{h_{t}+ h}-d\frac{h_{t}}{h(h_{t}+ h)}+l\frac{\rho \rho_{t}}{(h_{t}+ h)}\notag\\
&=-w^{11}\left[\frac{h_{11t}+ h_{11}- h_{11}-h+h+h_{t}}{h_{t}+ h}\right]-d\frac{h_{t}}{h(h_{t}+ h)}+l\frac{\rho \rho_{t}}{(h_{t}+ h)}\notag\\
&=-w^{11}\frac{h_{11t}+ h_{11}}{h_{t}+ h}+\frac{1}{h_{t}+1 h}-w^{11}-\frac{d}{h}+\frac{d}{ h +h_{t}}+l\frac{\rho \rho_{t}}{(h_{t}+ h)}\notag\\
&=-w^{11}\frac{h_{11t}+ h_{11}}{h_{t}+h}+\frac{(1+d)}{h_{t}+ h}-w^{11}-\frac{d}{h}+l\frac{\rho \rho_{t}}{(h_{t}+ h)}\notag\\
&\leq-w^{11}\sum w^{ii}\nabla_{11}w_{ii}+w^{11}\sum w^{ii}w^{kk}(\nabla_{1}w_{ik})^{2}-w^{11}\nabla_{11}\chi+\frac{(1+d)}{h_{t}+ h}+l\frac{\rho \rho_{t}}{(h_{t}+ h)}\notag\\
&=-w^{11}w^{ii}(\nabla_{ii}w_{11}-w_{11}+w_{ii})+w^{11}\sum w^{ii}w^{jj}(\nabla_{1}w_{ij})^{2}\notag\\
&\quad -w^{11} \nabla_{11}\chi+\frac{(1+d)}{h_{t}+ h}+\frac{l\rho \rho_{t}}{(h_{t}+ h)}\notag\\
&\leq \sum w^{ii}(w^{11})^{2}(\nabla_{i}w_{11})^{2}-2w^{11}\sum w^{ii}w^{kk}(\nabla_{1}w_{ik})^{2}+\sum w^{ii} d\left(\frac{h_{ii}}{h}-\frac{h^{2}_{i}}{h^{2}}\right)-\sum w^{ii}l\rho^{2}_{i}-\sum w^{ii}l\rho \rho_{ii}\notag\\
&\quad +w^{11}\sum w^{ii}w^{kk}(\nabla_{1}w_{ik})^{2}-w^{11}\nabla_{11}\chi+\frac{(1+d)}{h_{t}+ h}+\frac{l\rho \rho_{t}}{(h_{t}+ h)}+\sum w^{ii}-(n-1)w^{11}\notag\\
&\leq -d\sum w^{ii}+\frac{(n-1)d}{h}-w^{11}\nabla_{11}\chi+\frac{(1+d)}{h_{t}+ h}+l\left[\frac{\rho \rho_{t}}{(h_{t}+ h)}-\sum w^{ii}(\rho^{2}_{i}+\rho \rho_{ii})\right].
\end{align}
It is simple to calculate
\begin{equation}\label{simpl}
\begin{split}
\rho_{t}&=\frac{h h_{t}+\sum h_{k}h_{kt}}{\rho},
\\
\rho_{i}&=\frac{hh_{i}+\sum h_{k}h_{ki}}{\rho}=\frac{h_{i}w_{ii}}{\rho},\\
\rho_{ij}&=\frac{hh_{ij}+h_{i}h_{j}+\sum h_{k}h_{kij}+\sum h_{ki}h_{kj}}{\rho}-\frac{h_{i}h_{j}w_{ii}w_{jj}}{\rho^{3}}.
\end{split}
\end{equation}
Applying \eqref{simpl}, we have
\begin{align}\label{EFin2}
\frac{\rho \rho_{t}}{h_{t}+ h}-\sum w^{ii}(\rho^{2}_{i}+\rho \rho_{ii})&=\frac{hh_{t}}{h_{t}+ h}-h\sum w^{ii}h_{ii}-w^{ii}\sum h^{2}_{ii}\notag\\
&\quad -\frac{|\nabla_{\sn} h|^{2}}{h_{t}+ h}+\sum h_{k}\nabla_{k}\chi\notag\\
&=h-\frac{ \rho^{2}}{h_{t}+ h}+(n-1)h-\sum w_{ii}+\sum h_{k}\nabla_{k}\chi\notag\\
&\leq C-\frac{ \rho^{2}}{h_{t}+h}-\sum w_{ii}+\sum h_{k}\nabla_{k}\chi.
\end{align}
Substituting \eqref{EFin2} into \eqref{EFin}, we obtain
\begin{align}\label{xiaji1}
&\frac{\partial_{t}E}{h_{t}+ h}\leq -d\sum w^{ii}+C(d+l)+\frac{(1+d-l\rho^{2})}{h_{t}+h}-l\sum w_{ii}-w^{11}\nabla_{11}\chi+l\sum h_{k}\nabla_{k}\chi.
\end{align}
Since
\[
\nabla_{k}\chi=\frac{-f_{k}}{f}+\frac{h_{k}}{h}+\frac{\varphi^{'}}{\varphi}h_{k}+\frac{(\nabla G\cdot e_{i})w_{ki}}{G},
\]
and
\begin{align*}\label{}
\nabla_{11}\chi&=\frac{-ff_{11}+f^{2}_{1}}{f^{2}}+\frac{hh_{11}-h^{2}_{1}}{h^{2}}+\frac{\varphi^{''}h^{2}_{1}+\varphi^{'}h_{11}}{\varphi}-\frac{(\varphi^{'})^{2}h^{2}_{1}}{\varphi^{2}}\notag\\
&\quad -\frac{[(\nabla G \cdot e_{1})w_{11}]^{2}}{G^{2}}+\frac{((\nabla^{2}G\cdot e_{1})\cdot e_{1})w^{2}_{11}}{G}+\frac{(\nabla {G}\cdot e_{i})w_{11i}}{G}-\frac{(\nabla G \cdot x)w_{11}}{G}.
\end{align*}
It is clear to see that
\begin{align}\label{hk}
l\sum h_{k}\nabla_{k}\chi&=l\sum h_{k}\left[\frac{-f_{k}}{f}+\frac{h_{k}}{h}+\frac{\varphi^{'}}{\varphi}h_{k}+\frac{(\nabla G\cdot e_{i})\cdot w_{ki}}{G}\right]\notag\\
&\leq C_{1}l+l\frac{\nabla G\cdot e_{k}}{G} h_{k}w_{kk}=C_{1}l+l\frac{\nabla G\cdot e_{k}}{G} \rho \rho_{k}\leq C_{2}l,
\end{align}
and
\begin{align}\label{w111}
-w^{11}\nabla_{11}\chi&=-w^{11}\left(\frac{-ff_{11}+f^{2}_{1}}{f^{2}}+\frac{hh_{11}-h^{2}_{1}}{h^{2}}+\frac{\varphi^{''}h^{2}_{1}+\varphi^{'}h_{11}}{\varphi}-\frac{(\varphi^{'})^{2}h^{2}_{1}}{\varphi^{2}}\right)\notag\\
&\quad+\frac{[(\nabla G \cdot e_{1})]^{2}w_{11}}{G^{2}}-\frac{((\nabla^{2}G\cdot e_{1})\cdot e_{1})w_{11}}{G}-\frac{(\nabla {G}\cdot e_{i})w^{11}w_{11i}}{G}+\frac{\nabla G \cdot x}{G}.
\end{align}
From \eqref{E1} and \eqref{simpl}, we get
\[
w^{11}w_{i11}=-d\frac{h_{i}}{h}+l\rho \rho_{i}.
\]
Hence, \eqref{w111} becomes
\begin{equation}\label{w2}
-w^{11}\nabla_{11}\chi\leq C_{2}w^{11}+C_{3}+C_{4}w_{11}+C_{5}l +C_{6}d.
\end{equation}
Combining \eqref{hk} and \eqref{w2}, we obtain
\begin{equation}
\begin{split}
\label{gaslw9}
&l\sum h_{k}\nabla_{k}\chi-w^{11}\nabla_{11}\chi\\
&\leq \widetilde{C}_{1}l+\widetilde{C}_{2}d+\widetilde{C}_{3}w^{11}+\widetilde{C}_{4}w_{11}+\widetilde{C}_{5}.
\end{split}
\end{equation}
Substituting  \eqref{gaslw9} into \eqref{xiaji1} , if we choose $l\gg d$, then
\begin{align}\label{Finall}
&\frac{\partial_{t}E}{h_{t}+ h}\leq -d\sum w^{ii}+\widetilde{C}(d+l)+\widetilde{C}_{3}w^{11}+\widetilde{C}_{4}w_{11}+\widetilde{C}_{5}l +\widetilde{C}_{6}.
\end{align}
In view of  \eqref{Finall}, if we take
\[
d> \widetilde{C}_{3},
\]
then for $w^{11}$ large enough, there is
\[
\frac{\partial_{t}E}{h_{t}+ h}< 0,
\]
which yields
\[
E(x_{0},t)=\widetilde{E}(x_{0},t)\leq C
\]
for some $C>0$, independent of $t$. This implies that the principal curvature is bounded from above.

\end{proof}

\section{Proof of Theorem \ref{main*} and Theorem \ref{main*2}}
\label{Sec6}
In this section, we complete the main theorems.

{ \bf \emph{ Proof of Theorem \ref{main*}}.} Lemmas \ref{C0}, \ref{C1}, \ref{prin}, \ref{prin2} and \ref{gusk} show that \eqref{xOrflow} is uniformly parabolic in $C^{2}$ norm space. Then, by means of the standard Krylov's regularity theory \cite{K87} of uniform parabolic equation, the estimates of higher derivatives can be naturally obtained, it implies that the long-time existence and regularity of the solution of \eqref{xOrflow}. Moreover, there exists a uniformly positive constant $C$, independent of $t$, such that
\begin{equation}\label{cij}
||h||_{C^{i,j}_{x,t}(\sn\times [0,\infty))}\leq C
\end{equation}
for each pairs of nonnegative integers $i$ and $j$.

With the aid of Arzel\`a-Ascoli theorem and a diagonal argument, we can extract a subsequence $\{t_{s}\}\subset (0,\infty)$ such that there exists a smooth function $\tilde{h}(x)$ satisfying
\begin{equation}\label{}
||h(x,t_{k})-\tilde{h}(x)||_{C^{i}(\sn)}\rightarrow 0
\end{equation}
for each nonnegative integer $i$ as $k\rightarrow \infty$.

Observe that the problem $\frac{1}{f}G\varphi \sigma_{k}=1$ does not have any variational structure, so we may expect the convergence of solutions for all initial hypersurfaces. Following the similar lines in \cite{BIS212}, utilizing the assumptions of Theorem \ref{main*}, as computed in \eqref{inti},
\begin{equation}
\begin{split}
\label{pres}
&\partial_{t}\left(\frac{P}{h}-1\right)-\Theta \sigma^{ij}_{k}\nabla_{ij}\left(\frac{P}{h}-1\right)\\
&=-\frac{P}{h}\left(k+\frac{\varphi^{'}h}{\varphi}+\frac{\nabla G\cdot X}{G}\right)+\frac{P^{2}}{h^{2}}\left(k+\frac{\varphi^{'}h}{\varphi}+\frac{ (\nabla G\cdot hv)}{G}\right)+2\frac{\Theta}{h}\sigma^{ij}_{k}\nabla_{i}h\nabla_{j}\left(\frac{P}{h}-1\right)\\
&=\left(k+\frac{\varphi^{'}h}{\varphi}+\frac{\nabla G\cdot X}{G}\right)\left(\frac{P}{h}-1\right)\frac{P}{h}+2\frac{\Theta}{h}\sigma^{ij}_{k}\nabla_{i}h\nabla_{j}\left(\frac{P}{h}-1\right).
\end{split}
\end{equation}
Since $k+\frac{\varphi^{'}h}{\varphi}+\frac{\nabla G\cdot X}{G}<0$, choosing the initial hypersurface $M_{0}$ satisfying $(\frac{P}{h}-1)_{M_{0}}>0$, using \eqref{pres}, one see that the positivity of $\frac{P}{h}-1$ is preserved along the flow, so we obtain
\begin{equation}\label{}
\partial_{t}h=P-h>0 \quad   {\rm for \ all }\ t\geq 0.
\end{equation}
By Lemma \ref{C0},
\begin{equation}\label{}
C\geq h(x,t)-h(x,0)=\int_{0}^{t}(P-h)(x,t)dt.
\end{equation}
This implies that
\[
\int_{0}^{\infty}(P-h)(x,t)dt\leq C.
\]
Then there exists a subsequence of $t_{k}\rightarrow \infty$ such that
\[
(P-h)(x,t_{k})\rightarrow 0 \ {\rm as} \ t_{k}\rightarrow \infty.
\]
Namely,
\[
\left(
\varphi (h(x,t_{k}))G(X_{t_{k}})\sigma_{k}(x,t_{k})h(x,t_{k})\frac{1}{f(x)}-h(x,t_{k})\right)\rightarrow 0 \ {\rm as} \ t_{k} \rightarrow \infty.
\]
Passing to the limit, we obtain
\[
\varphi (\tilde{h}(x))G(\nabla \tilde{h})\tilde{\sigma}_{k}(x)\frac{1}{f(x)}=1.
\]
Thus the hypersurface with support function $\tilde{h}(x)$ is our desired solution to
\[
\varphi(h(x))G(\nabla h)\sigma_{k}(x)\frac{1}{f(x)}=1.
\]

{ \bf \emph{ Proof of Theorem \ref{main*2}}.} Similarly, Lemmas \ref{C0}, \ref{C1}, \ref{prin}, \ref{prin2}, \ref{Gau} reveal that \eqref{cij} holds. We are in a position to establish the functional (see also \cite{LY20}) corresponding  to \eqref{xOrflow} with $k=n-1$ as
\begin{equation}\label{Jt}
J(t)=\int_{\sn}\left(\int^{t}_{0}\frac{1}{\varphi(s)}ds\right)f(x)dx-\int_{\sn}\left(\int^{\rho}_{0}G(s,u)s^{n-1}ds\right)du.
\end{equation}
Under the flow \eqref{xOrflow}, the monotonicity of $J(t)$ is showed as in the following
\begin{equation}
\begin{split}
\label{monj}
\frac{d J(t)}{dt}&=\int_{\sn}\frac{f(x)}{\varphi(h)}\frac{\partial h}{\partial t}dx-\int_{\sn}G(\rho,u)\rho^{n-1}\frac{\partial \rho}{\partial t}du\\
&=\int_{\sn}\frac{f(x)}{\varphi(h)}\frac{\partial h}{\partial t}dx-\int_{\sn}G(\rho,u)\frac{\partial h}{\partial t}\frac{\rho^{n}}{h}du\\
&=\int_{\sn}\frac{\partial h}{\partial t}\left(\frac{f(x)}{\varphi (h)}-G(\rho,u)\sigma_{n-1}(x,t)\right)dx\\
&=-\int_{\sn}\frac{\left(G\sigma_{n-1}(x,t)h\frac{1}{f}-h\right)^{2}}{\frac{1}{f}\varphi h}dx\leq 0.
\end{split}
\end{equation}
Using \eqref{monj} and Lemmas \ref{C0}, \ref{C1}, there is
\begin{equation}\label{}
C\geq J(0)-J(t)=\int^{t}_{0}\int_{\sn}\frac{\left(G\varphi\sigma_{n-1}(x,t)h\frac{1}{f}-h\right)^{2}}{\frac{1}{f}\varphi h}dxdt,
\end{equation}
where $C$ is a positive constant, independent of $t$.
This implies that
\begin{equation}\label{}
\int^{+\infty}_{0}\int_{\sn}\frac{\left(G\varphi\sigma_{n-1}(x,t)h\frac{1}{f}-h\right)^{2}}{\frac{1}{f}\varphi h}dxdt\leq C,
\end{equation}
Then, there exists a subsequence of $t_{k}\rightarrow \infty$ such that
\begin{equation}\label{}
\int_{\sn}\frac{\left(G\varphi\sigma_{n-1}(x,t_{k})h\frac{1}{f}-h\right)^{2}}{\frac{1}{f}\varphi h(x,t_{k})}dx\rightarrow 0, \quad {\rm as}\ t_{k}\rightarrow \infty.
\end{equation}
Taking a limit, we get
\begin{equation}\label{}
\int_{\sn}\frac{\left(G(\nabla \tilde{h})\varphi(\tilde{h})\tilde{\sigma}_{n-1}(x)\tilde{h}\frac{1}{f}-\tilde{h}\right)^{2}}{\frac{1}{f}\varphi \tilde{h}(x)}dx=0.
\end{equation}
This illustrates that
\[
\frac{1}{f(x)}G(\nabla \tilde{h})\varphi(\tilde{h})\tilde{\sigma}_{n-1}(x)=1.
\]

\section{The uniqueness of the solution}
\label{Sec7}
Observe that, for general $\varphi$ and $G$, there is no uniqueness result for solution to \eqref{Or-Mong}. In what follows, we shall give a special uniqueness result for \eqref{Or-Mong}.

\begin{theo}\label{unique}
Suppose $G(y)=G(|y|)$. If
\begin{equation}\label{}
\varphi(ms_{1})G(ms_{2})\leq \varphi(s_{1})G(s_{2})m^{-k}
\end{equation}
holds for some positive $s_{1}, s_{2}$, $m\geq 1$. Then the solution to the equation
\begin{equation}\label{uneq}
 \frac{1}{f(x)}\sigma_{k}(x)\varphi(h)G(\nabla h)=1
\end{equation}
is unique. Here $1\leq k\leq n-1$, $k$ is an integer.
\end{theo}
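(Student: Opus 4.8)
The plan is a two–point maximum–principle comparison at the extrema of the quotient of two support functions. Let $h_1,h_2$ be the support functions of two smooth strictly convex solutions of \eqref{uneq}, write $w_{ij}[h]=\nabla_{ij}h+h\delta_{ij}$ for the matrix of principal radii and $\sigma_k[h]=\sigma_k(\lambda(w_{ij}[h]))$, and set $\psi=h_1/h_2\in C^{\infty}(\sn)$. Put $m=\max_{\sn}\psi$ and $\mu=\min_{\sn}\psi$. Since $\psi(x)\,\psi(x)^{-1}\equiv1$ we have $m\ge1\ge\mu$ (interchanging $h_1$ and $h_2$ if necessary), so it suffices to prove $m\le1$ and $\mu\ge1$; together these force $\psi\equiv1$, i.e.\ $h_1\equiv h_2$.

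First I would treat the maximum. Pick $x_0$ with $\psi(x_0)=m$. The first–order condition $\nabla_{\sn}\psi(x_0)=0$ gives $\nabla_{\sn}h_1=m\nabla_{\sn}h_2$ and $h_1=mh_2$ at $x_0$, and substituting this into the covariant Hessian of the quotient makes the first–order terms cancel, so the second–order condition $\nabla^{2}_{\sn}\psi(x_0)\le0$ reduces to $\nabla^{2}_{\sn}h_1\le m\,\nabla^{2}_{\sn}h_2$, hence $w_{ij}[h_1]\le m\,w_{ij}[h_2]$ at $x_0$. By strict convexity both matrices lie in the positive cone, so monotonicity and $k$–homogeneity of $\sigma_k$ yield $\sigma_k[h_1]\le m^{k}\sigma_k[h_2]$ at $x_0$. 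Moreover $|\nabla h_\ell|^{2}=|\nabla_{\sn}h_\ell|^{2}+h_\ell^{2}$ (with $\nabla$ the ambient gradient, cf.\ \eqref{hf}), whence $|\nabla h_1|=m|\nabla h_2|$ at $x_0$, and since $G$ is radial $G(\nabla h_1)=G(m\nabla h_2)$ there. Dividing \eqref{uneq} for $h_1$ by \eqref{uneq} for $h_2$ at $x_0$ then gives
\[
1=\frac{\sigma_k[h_1]\,\varphi(h_1)\,G(\nabla h_1)}{\sigma_k[h_2]\,\varphi(h_2)\,G(\nabla h_2)}\bigg|_{x_0}\le m^{k}\,\frac{\varphi(mh_2)\,G(m\nabla h_2)}{\varphi(h_2)\,G(\nabla h_2)}\bigg|_{x_0}.
\]
If $m>1$, applying the structural hypothesis with $s_1=h_2(x_0)$, $s_2=|\nabla h_2(x_0)|$ (in the strict form $\varphi(ms_1)G(ms_2)<\varphi(s_1)G(s_2)m^{-k}$ valid for $m>1$) makes the right–hand side strictly less than $1$, a contradiction; hence $m\le1$.

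For the minimum I would run the identical computation at a point $x_1$ with $\psi(x_1)=\mu$, where the second–order inequality now reverses: $w_{ij}[h_1]\ge\mu\,w_{ij}[h_2]$, together with $h_1=\mu h_2$ and $|\nabla h_1|=\mu|\nabla h_2|$ at $x_1$. This leads to $1\ge\mu^{k}\,\varphi(\mu h_2)G(\mu\nabla h_2)/\big(\varphi(h_2)G(\nabla h_2)\big)$ at $x_1$, which, rewritten with the dilation factor $\tilde m=1/\mu\ge1$ and $s_1=\mu h_2(x_1)$, $s_2=\mu|\nabla h_2(x_1)|$, says $\varphi(\tilde m s_1)G(\tilde m s_2)\ge\varphi(s_1)G(s_2)\tilde m^{-k}$ — impossible for $\tilde m>1$ by the hypothesis. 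Hence $\tilde m=1$, i.e.\ $\mu\ge1$, and combined with $m\le1$ we conclude $h_1\equiv h_2$.

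Everything except two bookkeeping points is routine. The first is the exact cancellation of the gradient terms in $\nabla^{2}_{\sn}(h_1/h_2)$ at a critical point, which is what converts the inequality for $\psi$ into the clean matrix inequality $w[h_1]\le m\,w[h_2]$ (resp.\ $\ge\mu\,w[h_2]$). The second, and the real heart of the statement, is the homogeneity matching: the factor $m^{k}$ generated by $\sigma_k[h_1]\le m^{k}\sigma_k[h_2]$ must be absorbed precisely by the $m^{-k}$ in the assumed inequality on $\varphi G$, so the order $k$ of $\sigma_k$ is forced to be the exponent appearing in the hypothesis. I expect the only subtlety worth spelling out in the write-up is that strictness of the inequality for $m>1$ (equivalently, equality occurring only at $m=1$) is what actually closes the argument; no compactness, variational structure, or regularity beyond smoothness and strict convexity of the two solutions is used.
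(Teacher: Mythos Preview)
Your approach is essentially the same as the paper's: both argue by the maximum principle applied to the quotient $h_1/h_2$ (the paper uses $\log(h_1/h_2)$, a cosmetic difference), extract the matrix inequality $w[h_1]\le m\,w[h_2]$ from the second-order condition at the extremum, and combine the $k$-homogeneity and monotonicity of $\sigma_k$ with the structural hypothesis on $\varphi G$; for the reverse direction the paper simply swaps $h_1$ and $h_2$ rather than treating the minimum separately. Your remark that the contradiction genuinely requires the \emph{strict} form of the hypothesis for $m>1$ is correct and is left implicit in the paper's write-up. One nit: the clause ``Since $\psi(x)\psi(x)^{-1}\equiv1$ we have $m\ge1\ge\mu$'' is a non sequitur---just drop it and argue directly that $m>1$ (resp.\ $\mu<1$) yields a contradiction.
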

\begin{proof}
Assume $h_{1}$ and $h_{2}$ be two solutions of equation \eqref{uneq}. To prove $h_{1}=h_{2}$, on the one  hand, we take by contradiction, with satisfying $\max \frac{h_{1}}{h_{2}}>1$. Suppose $\frac{h_{1}}{h_{2}}$ achieves its maximum at $x_{0}\in\sn$. It follows $h_{1}(x_{0})>h_{2}(x_{0})$. Let $\Lambda={\rm log}\frac{h_{1}}{h_{2}}$. So, at $x_{0}$, one has
\begin{equation}\label{fide}
0=\nabla_{\sn} \Lambda=\frac{\nabla_{\sn} h_{1}}{h_{1}}-\frac{\nabla_{\sn} h_{2}}{h_{2}},
\end{equation}
and applying \eqref{fide}, there is
\begin{equation}
\begin{split}
\label{sede}
0&\geq \nabla^{2}_{\sn}\Lambda\\
&=\frac{\nabla^{2}_{\sn}h_{1}}{h_{1}}-\frac{\nabla_{\sn} h_{1}\otimes \nabla_{\sn} h_{1} }{h^{2}_{1}}-\frac{\nabla^{2}_{\sn}h_{2}}{h_{2}}+\frac{\nabla_{\sn} h_{2}\otimes \nabla_{\sn} h_{2} }{h^{2}_{2}}\\
&=\frac{\nabla^{2}_{\sn}h_{1}}{h_{1}}-\frac{\nabla^{2}_{\sn}h_{2}}{h_{2}}.
\end{split}
\end{equation}
Since $h_{1}$ and $h_{2}$ are solutions of equation \eqref{uneq}, using \eqref{uneq} and \eqref{sede}, at $x_{0}$,  we obtain
\begin{equation}
\begin{split}
\label{h1h2}
1&=\frac{\sigma_{k}(\nabla^{2}_{\sn}h_{2}+h_{2}I)G(|\nabla_{\sn}h_{2}+h_{2}x|)\varphi(h_{2})}{\sigma_{k}(\nabla^{2}_{\sn}h_{2}+h_{1}I)G(|\nabla_{\sn}h_{1}+h_{1}x|)\varphi(h_{1})}\\
&=\frac{h^{k}_{2}\sigma_{k}\left(\frac{\nabla^{2}_{\sn}h_{2}}{h_{2}}+I\right)G\left(h_{2}\sqrt{\Big|\frac{\nabla_{\sn}h_{2}}{h_{2}}\Big|^{2}+1}\right)     \varphi(h_{2})}{h^{k}_{1}\sigma_{k}\left(\frac{\nabla^{2}_{\sn}h_{1}}{h_{1}}+I\right)G\left(h_{1}\sqrt{\Big|\frac{\nabla_{\sn}h_{1}}{h_{1}}\Big|^{2}+1}\right)     \varphi(h_{1})}\\
&\geq \frac{h^{k}_{2}G\left(h_{2}\sqrt{\Big|\frac{\nabla_{\sn}h_{1}}{h_{1}}\Big|^{2}+1}\right)     \varphi(h_{2})}{h^{k}_{1}G\left(h_{2}\sqrt{\Big|\frac{\nabla_{\sn}h_{1}}{h_{1}}\Big|^{2}+1}\right)     \varphi(h_{1})}
\end{split}
\end{equation}
Set $h_{2}(x_{0})=\delta h_{1}(x_{0})$ and $s=h_{1}\left(\sqrt{\Big|\frac{\nabla_{\sn}h_{1}}{h_{1}}\Big|^{2}+1}  \right)(x_{0})$. Then
\[
\frac{\delta^{k}G(\delta s)\varphi(h_{2})}{G(s)\varphi(h_{1})}\leq 1,
\]
i,e,
\[
G(\delta s)\varphi(\delta h_{1})\leq G(s)\varphi (h_{1})\delta ^{-k}.
\]
In light of the assumption in Theorem \ref{unique}, $\delta \geq 1$, it implies that $h_{1}(x_{0})\leq h_{2}(x_{0})$, which is a contradiction. This reveals
\begin{equation}\label{h1xiao}
\max \frac{h_{1}}{h_{2}}\leq 1.
\end{equation}
On the other hand, interchanging the role of $h_{1}$ and $h_{2}$, applying the same argument as above, we have
\begin{equation}\label{h2xiao}
\max\frac{h_{2}}{h_{1}}\leq 1.
\end{equation}
Combining \eqref{h1xiao} and \eqref{h2xiao}, this illustrates that $h_{1}=h_{2}$. So, we complete the proof.

\end{proof}

\section*{Acknowledgment}The authors would like to express sincere gratitude to their supervisor Prof. Yong Huang for his encouragement and constant guidance.

\end{document}